\newcommand{\bmat}[1]{\begingroup%
	\begin{bmatrix} #1 \end{bmatrix}
	\endgroup}
\newcommand{\ind}{\scalebox{1.15}{$\mathds{1}$}}
\newcommand{\one}{\scalebox{1.1}{$\mathbf{1}$}}
\newcommand{\reals}{\ensuremath{\mathbb{R}}}
\newcommand{\integers}{\ensuremath{\mathbb{Z}}}
\newcommand{\coloneq}{\stackrel{\textup{\tiny def}}{=}}
\newcommand{\eqcolon}{\stackrel{\textup{\tiny def}}{=}}
\newtheorem{theorem}{Theorem}
\newtheorem{remark}{Remark}
\newcommand{\J}{\mathcal{K}}
\DeclareMathOperator{\prob}{prob}
\DeclareMathOperator{\trace}{tr}
\DeclareMathOperator{\card}{card}
\newcommand{\abs}[1]{\left\lvert  #1 \right\rvert}
\newcommand{\norm}[1]{\left\lVert  #1 \right\rVert}
\newcommand{\proj}{\ensuremath{\textrm{proj}}}
\newcommand{\skipthis}[1]{{}}
\title{Resource Allocation with Population Dynamics}
\author{Jonathan Epperlein and Jakub Mare{\v c}ek\thanks{J. Epperlein and J. Mare{\v c}ek are at 
              IBM Research -- Ireland, B3 IBM Campus Damastown, Dublin 15, Ireland. 
}
}
\date{}
\begin{document}

	\maketitle

\begin{abstract}
Many analyses of resource-allocation problems employ simplistic models of the population.
Using the example of a resource-allocation problem of
Mare{\v c}ek et al.\ [Int. J. Control 88(10), 2015], 
we introduce rather a general behavioural model, where the evolution of a heterogeneous population of agents is governed by a Markov chain.
Still, we are able to show that the distribution of agents across resources converges in distribution, 
for suitable means of information provision, under certain assumptions.
The model and proof techniques may have wider applicability.
\end{abstract}

%
\section{Introduction}\label{sec:intro}

 There are resource-allocation problems encountered in almost every aspect of human lives: 
 from utilities such as power systems and water systems, to transportation, and office space allocation.
 Many analyses of resource-allocation problems employ simplistic models of the population
 which ignore much of the complexity of human behaviour.
 Notice, for example, that the demand for a resource is often non-stationary, as exemplified by the work-day morning rush hour in transportation
 and the existence of predictable peaks in the demand in many other domains.
 Notice, further, that humans may have access to only very limited amount of information, 
 but may still consider multiple criteria, and that their appreciation of the criteria may vary over time.
 As an example of a particular resource-allocation problem, we introduce a model of behaviour and the related demand process, 
 which captures both the multi-criteria aspects of the decision making and non-stationarity of the demand process. 
 Still, we show that the distribution of agents across resources converges in distribution, 
 for suitable means of information provision and under certain assumptions.

 As our running example, we consider the problems faced by transportation authorities in charge of a road network composed of a number of road segments.
 For each road segment, the travel time is, in principle, a time series with a data point per a vehicle passing across the road segment.
 Transportation authorities increasingly have information on traffic conditions throughout the road network at this level of detail, 
 but want to broadcast much less information to the public.
 Mare\v{c}ek et al.~\cite{marevcek2015signaling,MarecekShortenYu2016b,marevcek2016signaling} have recently demonstrated that such information provision should be seen as means
 of modulating the demand process and that different means of modulation may lead to very 
 different outcomes. 
 Whereas, for example, it is natural for each user to forecast the travel time as a scalar for each route, if the transportation authorities announce
 two distinct scalar values for two distinct alternative routes, the route with the lower scalar value announced may become congested, and if the authorities are truthful in reporting the congestion, 
 the congestion may alternate between the two alternative routes in a sub-optimal limit-cycle behaviour, \emph{ad infinitum}. 
 In contrast, providing two intervals for two distinct alternative routes may benefit
 from the uncertainty in the response of the population.
 Mare\v{c}ek et al. used a rather simple behavioural model, where the response of each agent
 is determined by its level of risk aversion \cite{marevcek2016signaling,marevcek2015signaling} and possibly actuation delay \cite{marevcek2015signaling}, and the distribution of levels of risk aversion
 in the population is sampled in a memory-less fashion.

 In Section~\ref{sec:mod}, we introduce a considerably more realistic behavioural model, where the evolution of a heterogeneous population of agents is governed by a Markov chain. Section~\ref{sec:ifs} gives a brief summary of relevant results in the theory of iterated function systems and their extension to recurrent iterated function systems; using these results, we show in Section~\ref{sec:main} that the distribution of agents across resources converges in distribution, under certain assumptions. Section~\ref{sec:computational} illustrates the model and main result with a few simulations, and we explain the importance of the result in the context of related work in Section~\ref{sec:related}, before concluding with a variety of suggestions for future work in the field. Appendix~\ref{sec:app:P} details the crucial step of generating a transition matrix for the Markov chain governing the population dynamics. 


\section{The Model}\label{sec:mod}

	Let us consider a model consisting of three components \cite{marevcek2016signaling,marevcek2015signaling}: a set of resources, a central authority that provides information about the resources,
   and a population of agents, who make decisions as to which resource to use, based solely on the information made available by the central authority and their personal cost functions. 
    Further, let us extend this model with population dynamics in Section \ref{ssec:popdyn}. 
    A summary of the model is presented in Figure~\ref{fig:modflow}.
	In the following, we describe each of the components in detail. 

\begin{figure}
	\centering
	\includegraphics[width=0.9\textwidth]{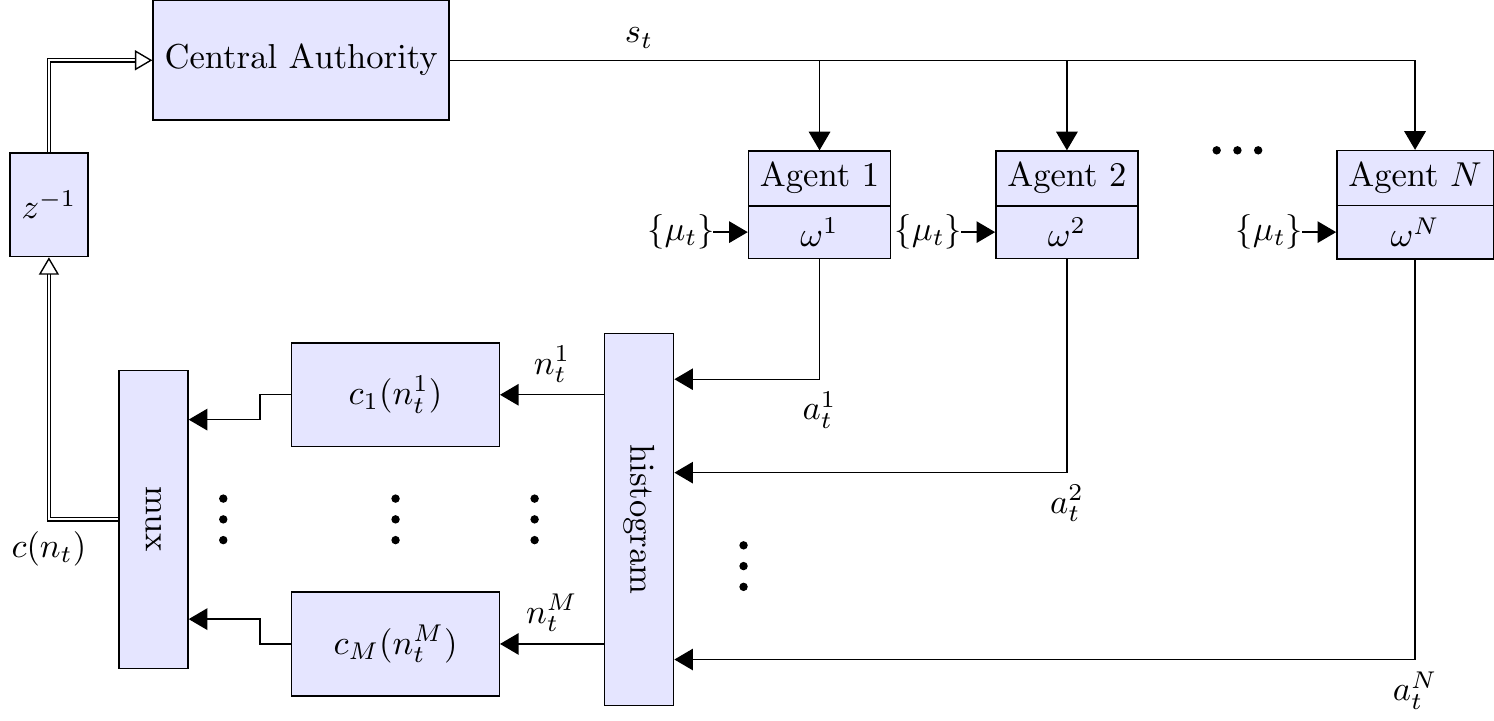}
	\caption{A signal flow of the model described in Section~\ref{sec:mod}: The central authority broadcasts at time $t$ the signal $s_t$ based on information collected up until then (which does not include $t$ itself). Every agent $i$ makes its decision $a_t^i$ which resource to use based on its policy $\omega^i$. The decisions of all agents generate the new congestion profile $n_t$, which then leads to the current cost profile $c(n_t)$; $z^{-1}$ denotes a unit delay.}
	\label{fig:modflow}
\end{figure}

\subsection{Resources}
There is a finite set $\{1,2,\dotsc,M\}$ of resources, from which each agent chooses exactly one at every time step. For example in transportation, the resources would correspond to routes from origins to destinations provided by a road infrastructure. The cost of using such a resource is modelled by continuous cost functions (``link performance functions'') of the form
$
		c_m(n_t^m),
$
where the index $m$ denotes the resource, and $n_t^m$ denotes the number of agents choosing resource $m$ at time step $t$. Implicit in this form is the assumption of separability: the performance of resource $m$ depends only on the utilization of resource $m$ itself. 
In the transportation applications, the cost would correspond to the travel time of a single driver along the route. 
The vector ${n}_t \coloneq\bmat{n_t^1&\dotsm&n_t^M}$ is the \textit{congestion profile}. 
\subsection{Central Authority and Signalling Scheme}
The central authority has access to the history of congestion profiles $\{{n}_\tau\}_{\tau=1}^{t-1}$ and provides information derived from this history to all agents, with the aim of lowering the social cost, $\sum_{m = 1}^{M} (n_t^m / N) c_m(n_t^m)$, while being truthful, and hence trustworthy.   
Here, the same \textit{two} pieces of information, $u_t^m$ and $v_t^m$, are provided to all agents about \textit{each} resource $m$. 
One can imagine $u_t^m$ and $v_t^m$ are, for example, the minimum and maximum costs within a time window, mean and variance over the same time window, or information about travel time and tolls at resource $m$; see Table~\ref{tab:int-signals} for details and references. 
In all cases listed, the signal $s_t = \bmat{u_t^1 & v_t^1 & u_t^2 & \dotsm &u_t^M & v_t^M}$ remains in $\reals^{2M}$. 
	
	For this paper, we use exponential smoothing on the past costs of resource $m$ to obtain $u_t^m$, whereas $v_t^m$ provides a measure of their volatility:
	\begin{equation}\label{eq:uv}
	\begin{split}
		u_t^m & \coloneq (1-q_1)\Bigl( c_m(n_{t-1}^m) + q_1 c_m(n_{t-2}^m) + \dotsb + q_1^{t-1} c_m(n_{1}^m) \Bigr)\\
		v_t^m & \coloneq (1-q_2)\Bigl( \bigl\vert c_m(n_{t-1}^m)-u_{t-1}^m\bigl|\, +\, q_2 \bigl|c_m(n_{t-2}^m)-u_{t-2}^m\bigl|\, +\, \dotsb \\
				& \mbox{\hspace{10em}} + q_2^{t-1} \bigl|c_m(n_{1}^m)-u_{1}^m\bigl| \Bigr),
	\end{split}
	\end{equation}
	which can be implemented recursively as
	\begin{equation}\label{eq:uvrec}
	\begin{split}
		u_t^m & = q_1 u_{t-1}^m + (1-q_1)c_m(n_{t-1}^m)\\
		v_t^m & = q_2 v_{t-1}^m + (1-q_2)\,\bigl|c_m(n_{t-1}^m)-u_{t-1}^m\bigl|,
	\end{split}
	\end{equation}
	thus requiring minimal storage at the central authority.

	\subsection{Agents and their Policies}
	Agents base their decisions at time $t$ on the information provided by the central authority at time $t$ by applying their policies. The heterogeneity of the agent population is taken into account by letting each agent have its own policy. Of course, a lot of policies are imaginable; we consider here a set of policies that is parametrized by $\omega\in [0,1]$, a weight in each agent's cost function: An agent using policy $\omega$ will choose the resource $m^\star$ that minimizes $\omega u_t^m + (1-\omega) v_t^m$. In other words, at time $t$ agents with policy $\omega$ select resource 
	\begin{align}
		\pi^\omega(s_t) \coloneq \arg \min_{m=1,\dotsc,M} \omega u_t^m + (1-\omega) v_t^m.
\label{piomega}
	\end{align}
	If for instance, $u_t^m$ (resp.\ $v_t^m$) is the minimal (resp.\ maximal) cost of resource $m$ observed in the last $r$ time steps, then agents with $\omega=1$ entirely disregard the worst case and could be said to be risk-seeking; similarly, $\omega=0$ would be risk-averse and $\omega=0.5$  risk-neutral. If instead a mean and toll are broadcast, $\omega$ parametrizes the trade-off between each agent's travel time and their wallet.
		
	Of course, the population of agents in a road network changes over time. 
 Mare\v{c}ek et al. \cite{marevcek2016signaling} modelled this by assuming that 1) the number $N$ of agents is fixed and 2) the distribution of policies $\omega$ among the agents is i.i.d., i.e.\ at each time step, the distribution is selected from a finite set of distributions $\{\eta_1,\dotsc,\eta_K\}$ according to a distribution $d=(d_1,\dotsc,d_K)$. A little more formally: at each $t$, choose $\mu_t$ from $\{\eta_1,\dotsc,\eta_K\}$ according to $\prob(\mu_t=\eta_k)=d_k$; the population of agents at time $t$ is then $\mu_t$.  

\subsubsection{Population Dynamics}	\label{ssec:popdyn}
Both assumptions -- constant number of agents and i.i.d.\ replacement of the population at every $t$ -- are not very realistic. Here, we replace the assumption of i.i.d.\ population renewal (according to the fixed distribution $d$) with a Markov chain that makes $d$ dynamic; in this way the new population selected at each time step can depend on the previous population. For instance, it seems unlikely that a population of mainly risk-averse drivers at one time step will give rise to a population of mainly risk-seeking ones in the next instance, reflecting the fact that not all users leave the road network at once, or that there might be some correlation between, say, the time of day and what kind of drivers are on the road. 

More precisely: As before, we assume that there is a family of possible distributions of the levels of risk aversion in the population, with a finite index set $\J=\{1,\dotsc,K\}$. Additionally, there is a Markov chain with $K$ states and transition probability matrix $P\in [0,1]^{K\times K}$. The probability of appearance of population $\eta_j$, $j \in \J$,  at iteration $t+1$ is now given by  $\prob(\mu_{t+1}=\eta_j\mid \mu_t = i) = p_{i j}$, i.e.\ the probability of a specific $\eta_j$ depends on what the last observed population was.	This setting is clearly more general: with $p_{ij}=1/K$ $\forall i,j\in\J$, we recover the original i.i.d.\ assumption. 
 
The transition probabilities $p_{ij}$ can be used to model different ideas about the mechanism behind the change in populations. Two straightforward ones are reviewed briefly here, more details are given in the appendix. 

The Markov chain could be used to encode dependence of the population on the time of day. As a simple example, say that there is a morning, noon, evening and night population of drivers, perhaps many risk-seeking drivers at night, mostly risk-neutral commuters in the morning, etc. Say those populations for morning, noon, evening, and night are numbered as 1,2,3, and 4. The corresponding transition matrix and adjacency graph are:\\
\parbox{.4\textwidth}{\centering
\[
			P = \bmat{0&1&0&0\\0&0&1&0\\0&0&0&1\\1&0&0&0}
\]
}
\parbox{0.6\textwidth}{\centering
\includegraphics[clip=true,width=0.6\textwidth]{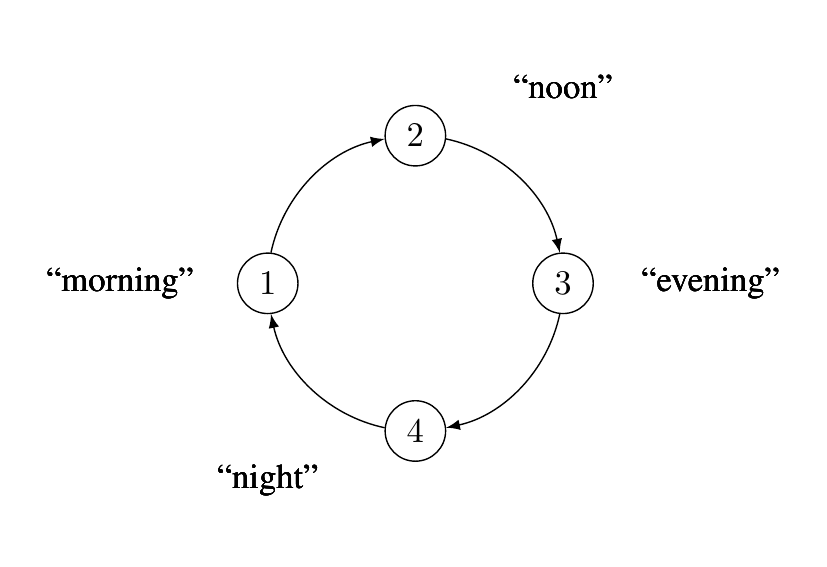} 
}
A simple generalization allowing for more than just one population per time of day can be achieved by replacing the ``1''s by appropriate blocks, see Section~\ref{ssec:app:timedep} for details.

If instead we are interested in a gradual change in population, e.g.\ due to individual drivers changing their behaviour or leaving and entering the road network, then the probability of the next population being $\eta_j$ given that the current one is $\eta_i$ should depend on ``how different'' they are from each other. In more mathematical terms, let $\Delta(\cdot,\cdot)$ be a metric on the space of populations so that $\Delta(\eta_i,\eta_j)$ provides a notion of distance between the populations $\eta_i$ and $\eta_j$. Then the probability $p_{ij}$ should be a decreasing function of $\Delta(\eta_i,\eta_j)$: the farther apart $\eta_i$ and $\eta_j$ are, the less likely it is that one gives rise to the other in the next time step. To this end, we choose a parameter $\psi\in(0,1)$ that reflects the probability of an agent changing its policy, and let $p_{ij}$ proportional to $\psi^{\Delta(\eta_i,\eta_j)}$; see Section~\ref{ssec:app:EMD}.

In the case of drivers changing their policies, more drastic changes (e.g.\ going from risk-averse to risk-neutral) should contribute more to the distance than slight changes; this can be described by a discrete version of the \textit{Wasserstein metric} (see Section~\ref{ssec:app:EMD}). If, however, we want to model drivers leaving the road network and being replaced, then the distance $\Delta(\eta_i,\eta_j)$ should depend only on how many drivers have to change policy to transform $\eta_i$ into $\eta_j$ (and not on the size of those policy changes); let us call this metric the \textit{substitution metric}, since that is what it describes. Both of those notions are captured as special cases of the so-called Earth Mover's Distance (EMD, c.f. \cite{RubnerTomasiGuibas2000}). 
 For details such as definitions and computation, see Section~\ref{ssec:app:EMD}. 



 \skipthis{$P$ can be reflecting different populations at different times of day, in which case shift matrix (possibly replace 1s by blocks), or gradual changes in population due to drivers changing their minds or leaving and entering road network.\\ 		
 Latter case  requires notion of distance between populations: The ``closer'' $\eta$ is from $\gamma$, the more likely it is to follow $\gamma$. Here introduce only generic distance $\Delta(\eta,\gamma)$, defer specifics (Wasserstein, $\psi^{\Delta(\eta,\gamma)}$, etc.) to an appendix?}

\section{Iterated Function Systems (IFS)}\label{sec:ifs}
An \textit{iterated function system} is a generalization of a Markov chain. It consists of a state space $X$ with its metric $d$, a family $W$ of Lipschitz functions\footnote{%
	A function $f$ on the metric space $(X,d)$ is Lipschitz with constant $s$, or ``$s$-Lipschitz,'' if for all $x,y\in X$, we have $d\bigl(f(x),f(y)\bigr) \leq s d(x,y)$.
} $W=\{w_j:X\rightarrow X \, | \, j\in \J\}$, where $\J$ is some index set (finite, countably infinite or worse, but let us assume here that it is countable) and a measure $\nu$ that makes $(\J,\cdot,\nu)$ a probability space. 

At each iteration $t$ of the IFS, $j$ is selected from $\J$ according to $\nu$ and $w_j$ is applied to the current state $x_t$ to obtain $x_{t+1}$. Formally:
\[
\prob(X_{t+1}\in A | X_t=x_t) \coloneq \sum_{\J} \ind_{\{i | w_i(x_t)\in A  \}}(j) \nu(j),
\]
i.e.\ the probability of $x_{t+1}$ ending up in a set $A$ is the probability of selecting an index $j$ such that $w_j(x_t)$ is in $A$ (the measure of the set of indices $j$ for which $w_j(x_t)$ is in $A$). Here the Markov property is clear: the distribution of the next state $X_{t+1}$ depends only on the current state $x_t$ and not any ``older'' states $x_{t-1}$ etc.

In this way, the IFS ``jumps'' around $X$. Unless we have a degenerate case such as all $w_j$ having the same fixed point, we can not expect the sequence $\{x_t\}_{t=0}^{\infty}$ to converge in a classical sense; instead, we can have a weaker form, \textit{convergence in distribution}: that there is a distribution $\Pi$ on $X$ such that as $n\rightarrow\infty$, the set $\{x_0,x_1,\dotsc,x_n\}$ will be distributed according to $\Pi$.

\begin{theorem}[E.g. \protect{\cite[Thm. 1.1]{DiaconisFreedman1999}}] \label{thm:ifs}
	Let $L_j$ denote the Lipschitz constant of $w_j$ and assume that the IFS is \emph{contractive on average}, i.e.
	\begin{equation}
	\sum_{\J} \nu(j) \log(L_j)<0. \label{eq:contronavg}
	\end{equation}
	Then, there is a distribution $\Pi$ on $X$ such that $\{x_0,x_1,\dotsc,x_n\}$ is distributed according to $\Pi$ as $n\rightarrow\infty$.
\end{theorem}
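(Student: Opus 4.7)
The plan is to follow the classical backward-iteration strategy that goes back to Furstenberg and was later streamlined by Diaconis and Freedman. Let $j_1,j_2,\dotsc$ be i.i.d.\ $\J$-valued samples drawn from $\nu$. Define the \emph{forward} process by $X_t(x)=w_{j_t}\circ w_{j_{t-1}}\circ\dotsb\circ w_{j_1}(x)$, which is exactly the IFS process of the theorem, and the \emph{backward} process by $Y_t(x)=w_{j_1}\circ w_{j_2}\circ\dotsb\circ w_{j_t}(x)$. Because the $j_i$ are exchangeable, $X_t(x)$ and $Y_t(x)$ have identical distributions for every fixed $t$ and $x$. The advantage of $Y_t$ is that a new step prepends an outermost map instead of appending an innermost one, so the trajectory is monotone in the sense of nested images, which opens the door to almost-sure arguments.

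Next, I would show that $\{Y_t(x)\}_{t\geq 1}$ is almost surely Cauchy in $(X,d)$. For any $t$,
\[
d\bigl(Y_{t+1}(x),Y_t(x)\bigr)\;\leq\; L_{j_1}L_{j_2}\dotsm L_{j_t}\;d\bigl(w_{j_{t+1}}(x),x\bigr),
\]
by iteratively applying the Lipschitz bound $d(w_j(y),w_j(z))\leq L_j d(y,z)$ to the outer $t$ maps. Taking logarithms, $\frac{1}{t}\sum_{i=1}^{t}\log L_{j_i}\to\sum_{\J}\nu(j)\log L_j <0$ almost surely by the strong law of large numbers, so the product $L_{j_1}\dotsm L_{j_t}$ decays exponentially fast. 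Combined with a mild integrability assumption on $\log^{+}d(w_j(x_0),x_0)$ for some reference $x_0$ (a hypothesis implicit in the Diaconis--Freedman setting, used to dominate the random factor $d(w_{j_{t+1}}(x),x)$), a Borel--Cantelli argument yields summability of the increments $d(Y_{t+1}(x),Y_t(x))$ almost surely, so $Y_t(x)$ converges almost surely to a limit $Y_\infty$.

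A short additional step shows that $Y_\infty$ does not depend on the starting point $x$: for two initial points $x,y$,
\[
d\bigl(Y_t(x),Y_t(y)\bigr)\;\leq\; L_{j_1}\dotsm L_{j_t}\;d(x,y)\;\longrightarrow\;0\quad\text{a.s.},
\]
so the a.s.\ limit is a genuine random element $Y_\infty$ whose distribution $\Pi$ is intrinsic to the IFS. Since almost sure convergence implies convergence in distribution, and $X_t(x)\stackrel{d}{=}Y_t(x)$ for every $t$, we conclude that $X_t(x)$ converges in distribution to $\Pi$. Invariance of $\Pi$ under the Markov kernel, and hence the identification of $\Pi$ as the stationary distribution whose empirical measure $\{x_0,\dotsc,x_n\}$ approximates, then follows from passing to the limit in $\mathbb{E}[f(X_{t+1})]=\mathbb{E}\sum_{\J}\nu(j)f(w_j(X_t))$ for bounded continuous $f$.

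The main obstacle, and the only place where care is needed, is the integrability/tightness step: the exponential contraction of $L_{j_1}\dotsm L_{j_t}$ only gives an a.s.\ Cauchy sequence after one controls the random magnitude $d(w_{j_{t+1}}(x),x)$, which can in principle be unbounded when $X$ is non-compact. Everything else — exchangeability, the Lipschitz chain of inequalities, and upgrading a.s.\ convergence of $Y_t$ to convergence in distribution of $X_t$ — is routine once that tail estimate is in place.
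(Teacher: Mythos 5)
The paper offers no proof of this theorem --- it is imported verbatim (and somewhat informally) from Diaconis and Freedman --- so the only meaningful comparison is with the proof in the cited source, and your argument is precisely that proof: the backward-iteration (exchangeability) trick, almost-sure exponential decay of the product of Lipschitz constants via the strong law, and the upgrade from a.s.\ convergence of the reversed iterates to convergence in distribution of the forward ones. Your argument is correct, and you are right to flag that it needs the moment hypotheses $E_\nu[\log^+ L_j]<\infty$ and $E_\nu\bigl[\log^+ d(w_j(x_0),x_0)\bigr]<\infty$, which the paper's statement silently omits but which are part of the hypotheses of the cited Theorem~1.1. One small mismatch worth noting: what your argument (and Diaconis--Freedman) delivers is convergence in distribution of $X_n$ to the unique stationary law $\Pi$, whereas the theorem as phrased here speaks of the trajectory $\{x_0,\dotsc,x_n\}$ being ``distributed according to $\Pi$,'' which reads as a statement about the empirical measure of the orbit; that stronger ergodic statement requires an additional argument (Elton's theorem) beyond what you prove, though for the way the result is used later in the paper, distributional convergence of $X_n$ is what matters.
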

If $W$ is a family of contractions, i.e.\ if $L_j<1$ for all $j$, then \eqref{eq:contronavg} is trivially satisfied.

\paragraph{IFS theory and congestion control} The traffic model described above can be recast as an IFS, where the index set is $\J = \{1,\dotsc,K\}$, and the randomly chosen function applied to the congestion profile $n_t$ at each time step is parametrized by the population distribution $\eta_j$. Theorem~\ref{thm:ifs} can then be applied to make the statement that there is a distribution $\Pi$ such that, as $t\rightarrow\infty$, $n_t$ follows this distribution. In other words, $n_t$ will not be totally erratic, but it will be predictable in the sense that it will eventually behave like samples from a random variable with a fixed distribution. 

\subsection{Recurrent IFSs}

The assumption in e.g.~\cite{marevcek2016signaling} of i.i.d.\ population renewal at each time step was necessary in order to apply  Theorem~\ref{thm:ifs}, as the probability measure $\nu$ is not allowed to change over time or depend on the state $x$. However, extensions that relax this do exist, e.g.~ \cite{MauldinSzarekUrbanski2009,barnsley1988invariant}. Here, we consider the extension in \cite{BarnsleyEltonHardin1989}, where a \textit{recurrent iterated function system} (RIFS) is introduced as an IFS with an underlying Markov chain that modifies $\nu$ at each time step. More precisely:

We have an IFS as described in the last section with a finite index set $\J$, say $\J=\{1,\dotsc,K\}$. Additionally, there is a Markov Chain with $K$ states and transition probability matrix $P\in [0,1]^{K\times K}$. The probability of applying $w_j$ at iteration $t+1$ is now given by  $\prob(i_{t+1}=j|i_t) = p_{i_t j}$, i.e.\ the probability of applying a specific $w_j$ depends on what the last applied function $w_{i_t}$ was! This is in contrast to the case of Section~\ref{sec:ifs}, where the probability to select a specific $w_j$ was always the same and given by $\nu(j)$.

This setting is clearly more general: for a classical IFS, we can simply choose $p_{ij}=1/K$ $\forall i,j\in\J$. On the other hand, the way that $X_t$ jumps around in $X$ now is \emph{not} a Markov process anymore --- the distribution of $X_t$ not only depends on $X_{t-1}$, but also on $i_{t-1}$ --- but the joint process of $(X_t,i_t)$ jumping around in $X\times \J$ is.

Results analogous to Theorem~\ref{thm:ifs} can be stated for this case, see e.g.\ \cite{barnsley1988invariant,BarnsleyEltonHardin1989}. We state
\begin{theorem}[\cite{BarnsleyEltonHardin1989}]
	\label{thm:markch}
	Assume we have an RIFS as described above, and let $m:\{1,\dots,K\}\rightarrow[0,1]$ denote the stationary distribution of the underlying Markov chain (i.e.\ $m$ corresponds to the normalized Perron eigenvector of $P^T$). Then, if
	\begin{equation}\label{eq:avgcontractive}
		\sum_{i=1}^K m(i) \log L_i = E_m\{ \log L_{i}  \} <0,
	\end{equation}
	there is a unique stationary distribution $\tilde{\nu}$ of the Markov process $(X_t,i_t)$ and $X_t$ converges in distribution to $\nu$ with $\nu(B) = \tilde{\nu}(B\times\J)$. Here, $L_i$ again denotes the Lipschitz constant of $w_i$, and $E_m$ denotes expected value with respect to $m$. 
\end{theorem}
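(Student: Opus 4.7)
The plan is to realize $(X_t,i_t)$ as a Markov chain on $X\times\J$ with transition kernel ``from $(x,i)$, draw $j$ with probability $p_{ij}$ and jump to $(w_j(x),j)$'', and then prove convergence to a unique invariant distribution $\tilde\nu$ via a pathwise contraction-on-average argument in the spirit of Elton's proof of Theorem~\ref{thm:ifs} (cf.\ \cite{DiaconisFreedman1999}). The substantive task is to show that, from any $x_0\in X$, the forward iterates $X_n$ converge in distribution to a single limit law $\nu$ on $X$; joint existence, uniqueness, and convergence of $\tilde\nu$ then follow, with $\J$-marginal forced to equal the unique Perron vector $m$.

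For the main step, start the index chain $(i_k)_{k\ge 1}$ in its stationary law $m$, and let $(\tilde i_k)$ denote the stationary time-reversed chain, whose transition matrix satisfies $m_i \tilde p_{ij} = m_j p_{ji}$. Fix $x_0\in X$ and set
\[
X_n = w_{i_n}\circ w_{i_{n-1}}\circ\dotsb\circ w_{i_1}(x_0), \qquad Y_n = w_{\tilde i_1}\circ w_{\tilde i_2}\circ\dotsb\circ w_{\tilde i_n}(x_0).
\]
A direct computation on finite-dimensional laws shows $(\tilde i_1,\dots,\tilde i_n) \stackrel{\mathrm{d}}{=} (i_n,\dots,i_1)$ under stationarity, so $X_n \stackrel{\mathrm{d}}{=} Y_n$ for every $n$, and it suffices to prove $Y_n$ converges almost surely. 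The Lipschitz bound yields
\[
d(Y_n,Y_{n+1}) \le L_{\tilde i_1} L_{\tilde i_2}\dotsm L_{\tilde i_n}\cdot d\bigl(x_0,w_{\tilde i_{n+1}}(x_0)\bigr),
\]
and Birkhoff's ergodic theorem applied to the stationary ergodic sequence $(\log L_{\tilde i_k})$ gives $\tfrac{1}{n}\sum_{k=1}^n \log L_{\tilde i_k}\to E_m\{\log L_i\}<0$ a.s.\ by hypothesis \eqref{eq:avgcontractive}. Hence $\prod_{k=1}^n L_{\tilde i_k}$ decays exponentially almost surely, the increments are summable, and $Y_n\to Y_\infty$ a.s.; the resulting law $\nu := \mathcal{L}(Y_\infty)$ is independent of $x_0$ because applying the same bound to two different initial conditions $x_0,x_0'$ gives $d(Y_n,Y_n')\le \prod L_{\tilde i_k}\cdot d(x_0,x_0')\to 0$.

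Putting the pieces together, $X_n\to \nu$ in distribution from every starting point, and the same backward-coupling template, extended to track the index variable, yields joint convergence of $(X_n,i_n)$ to a unique $\tilde\nu$ with $\J$-marginal $m$ and $X$-marginal $\nu$. The main obstacle I anticipate is the time-reversal bookkeeping: Elton's backward-composition trick is elementary in the i.i.d.\ case because the index sequence is exchangeable, whereas here it must be replayed using the reversed Markov chain, and one must verify that the ergodic theorem genuinely applies to $(\log L_{\tilde i_k})$ under $m$ (irreducibility being implicit in $m$ being a well-defined Perron eigenvector). A secondary technicality is ensuring $d(x_0,w_j(x_0))$ has an $m$-integrable bound so that the summability of $d(Y_n,Y_{n+1})$ actually follows; this is automatic when $X$ is a bounded subset of $\reals^M$, which is the relevant setting for the congestion profile, and otherwise requires a standard first-moment hypothesis on the $w_j$ at a reference point.
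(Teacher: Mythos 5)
Your argument is essentially correct, but it takes a genuinely different route from the paper: the paper does not prove this theorem at all --- its ``proof'' is a one-line reduction to Theorem~2.1(ii) of Barnsley, Elton and Hardin (take $n=1$ and discard the finer description of the stationary distributions). What you have written is, in effect, a reconstruction of the proof of that cited result: the backward-iteration argument of Elton/Diaconis--Freedman, adapted to the Markov-modulated setting by replaying the indices through the time-reversed chain $\tilde p_{ij}=m_j p_{ji}/m_i$ so that $X_n$ and $Y_n$ agree in law under the stationary start, with almost-sure convergence of the backward iterates supplied by Birkhoff's theorem applied to $(\log L_{\tilde i_k})$ and condition~\eqref{eq:avgcontractive}. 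That is the right mechanism, and it buys a self-contained argument where the paper offers only a citation. Two remarks. First, your worry about an integrable bound on $d\bigl(x_0,w_j(x_0)\bigr)$ is moot here: $\J$ is finite, so $\max_{j\in\J} d\bigl(x_0,w_j(x_0)\bigr)$ is a finite constant and summability of the increments is automatic; likewise ergodicity of the reversed chain follows from irreducibility of $P$, which is implicit in $m$ being the unique normalized Perron vector. Second, your closing claim that $X_n\to\nu$ in distribution \emph{from every starting point} is only established by your argument when the index chain is launched from $m$; for an arbitrary initial index one additionally needs aperiodicity of $P$ (note that the paper's own time-of-day example is a $4$-cycle, for which $(X_t,i_t)$ started at a fixed state cannot converge in distribution even though the unique stationary $\tilde\nu$ exists). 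This caveat is inherited from the theorem statement rather than introduced by you, but a complete write-up should either assume aperiodicity or weaken the conclusion to convergence of the empirical (Ces\`aro) averages.
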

\begin{proof}
	This is just a corollary (much weaker, but sufficient for our purposes) to \protect{\cite[Thm.\ 2.1 (ii)]{BarnsleyEltonHardin1989}}, which follows by taking $n=1$ and removing the specifics of the stationary distributions.\qed
\end{proof}

If~\eqref{eq:avgcontractive} holds, we again say that the RIFS is \textit{average contractive} or \textit{contractive on average}.

That means that the model in Section~\ref{sec:mod} can be made more general while staying amenable to a similar stability analysis. 

\section{The Main Result}
\label{sec:main}

The traffic model described above can be recast as iterated applications 
of functions to the broadcast signals $s_t$, where the function to apply is
chosen from a family whose index set is $\J = \{1,\dotsc,K\}$, with the next population of agents depending on the current population.
At each iteration $t$, $i_{t+1}$ is selected from $\J$ according to the $i_t$-th row of $P$, and the computation of the signals and the response of the population is captured by $w_{i_{t+1}}$ being applied to the current signal $s_t$ to obtain $s_{t+1}$. That, and under which conditions, we can hope for convergence in distribution of the signalling process and the congestion profile is stated in the following theorem:
\begin{theorem}\label{thm:main}
There exists a set of constants $\kappa_m'$, $m=1,\dotsc,M$ such that if the costs $c_m(\cdot)$ are $1/\kappa_m'$-Lipschitz (with respect to the 1-norm), the signal $s_t$ and the congestion profile $n_t$ converge in distribution as $t\rightarrow\infty$.
\end{theorem}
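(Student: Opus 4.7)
The plan is to recast the dynamics of the signal $s_t$ under the Markov chain $P$ of populations as a recurrent iterated function system on $X = \reals^{2M}$ (with the 1-norm) and index set $\J = \{1,\dotsc,K\}$, then invoke Theorem~\ref{thm:markch}. For each $j \in \J$, I would assemble $w_j : X \to X$ as the composition of three steps: (i) compute the congestion $n^m(s, \eta_j) = N \cdot \eta_j(\{\omega \in [0,1] : \pi^\omega(s) = m\})$ using the policy in~\eqref{piomega}; (ii) evaluate the costs $c_m(n^m(s,\eta_j))$; (iii) update $(u,v)$ via the exponential smoothing~\eqref{eq:uvrec}. The resulting $w_{i_{t+1}}(s_t)$ realises $s_{t+1}$ conditional on $\mu_{t+1} = \eta_{i_{t+1}}$, and the joint process $(s_t, i_t)$ is exactly the RIFS of Section~\ref{sec:ifs}.

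The work then reduces to producing, for each $j$, a Lipschitz constant $L_j$ and choosing $\kappa_m'$ so that~\eqref{eq:avgcontractive} holds. For step~(i), the sets $\{\omega : \pi^\omega(s) = m\}$ are separated by pairwise breakpoints
\begin{equation*}
	\omega^*_{m m'}(s) = \frac{v^{m'} - v^m}{(u^m - v^m) - (u^{m'} - v^{m'})},
\end{equation*}
which are piecewise Lipschitz in $s$ away from the degenerate locus $u^m - v^m = u^{m'} - v^{m'}$. Assuming each $\eta_j$ admits a bounded density on $[0,1]$, step~(i) is Lipschitz with a constant $\rho_j$ proportional to $N$, to that density bound, and to a combinatorial factor in $M$. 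Step~(iii) is affine with linear part of norm $\max(q_1, q_2) < 1$, and the nonlinear contribution from~(ii) has Lipschitz constant at most $\max(1-q_1, 1-q_2) \cdot \rho_j \cdot \max_m (1/\kappa_m')$. Combining,
\begin{equation*}
	L_j \;\le\; \max(q_1, q_2) \;+\; C_j \max_m (1/\kappa_m'),
\end{equation*}
for a constant $C_j$ depending on $\eta_j$ and $M$. Picking each $\kappa_m'$ large enough that $C_j \max_m (1/\kappa_m') < 1 - \max(q_1, q_2)$ for every $j \in \J$ yields $L_j < 1$ uniformly, which trivially satisfies the average-contractivity condition~\eqref{eq:avgcontractive}. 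Theorem~\ref{thm:markch} then delivers convergence in distribution of $s_t$; convergence of $n_t = n(s_t, \mu_{t+1})$ follows by the continuous mapping theorem applied to the joint process.

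The hard part is step~(i): individually, the policy $\pi^\omega$ is piecewise constant in $s$, so a single agent's choice is discontinuous in the signal. Only aggregation over an absolutely continuous $\eta_j$ smooths this out, so the theorem tacitly relies on such regularity of the population distributions, and possibly on restricting $X$ to a forward-invariant compact subset (ensured by boundedness of $c_m$ on $[0,N]$ and the convexity of the smoothing updates) where the breakpoint denominator is bounded away from zero. Everything else is routine: step~(iii) is explicitly affine, step~(ii) is the composition of two Lipschitz maps, and the constants $\kappa_m'$ are precisely the knob the theorem leaves free to tune.
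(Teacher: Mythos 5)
Your overall architecture matches the paper's: recast $(s_t,i_t)$ as an RIFS, bound the Lipschitz constants $L_j$ of the $w_j$ uniformly below $1$ by constraining the Lipschitz constants of the $c_m$, and invoke Theorem~\ref{thm:markch}. However, there is a concrete error in your accounting of what you call the ``affine step (iii),'' and it breaks the key estimate. The $v$-update is $v_t^m = q_2 v_{t-1}^m + (1-q_2)\bigl|c_m(n_{t-1}^m)-u_{t-1}^m\bigr|$: it depends on the $u$-coordinate \emph{directly} through the absolute value, not only through $c_m\circ n^m$. In the 1-norm this contributes an extra $(1-q_2)\abs{x_{2m-1}-y_{2m-1}}$, so the coefficient of the $u$-coordinate in the combined update is $q_1 + (1-q_2) = 1+q_1-q_2$, not $q_1$. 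The correct ``linear part'' therefore has norm $\max\{q_2,\,1+q_1-q_2\}$, not $\max(q_1,q_2)$. This is not cosmetic: for $q_1\geq q_2$ one has $1+q_1-q_2\geq 1$, so no choice of $\kappa_m'$ can force $L_j<1$ along this route, and your claim that ``picking each $\kappa_m'$ large enough \dots yields $L_j<1$ uniformly'' is false in that regime. The paper's proof obtains $\bar L_j \leq \max\{q_2, 1+q_1-q_2\} + (2-q_1-q_2)N\sum_m L_m\kappa_m$ and records the resulting hidden hypothesis $q_2>q_1$ as a remark; your argument needs the same repair and the same hypothesis.

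A secondary divergence: for step (i) you assume each $\eta_j$ has a bounded density on $[0,1]$ and argue via the breakpoints $\omega^*_{mm'}(s)$. In the paper's model $\Omega$ is a \emph{finite} set and $\mu_t(\omega)$ is an integer count (this is why $K=\binom{\card(\Omega)+N-1}{N}$ is finite), so $n^m(\cdot,\eta_j)$ is piecewise constant with unit jumps and is not Lipschitz in the literal sense; your density assumption changes the model rather than proving the step within it. The paper sidesteps this by importing the bound $\abs{n_t^m(x)-n_t^m(y)}\leq \kappa_m \max_\omega\mu(\omega)\norm{x-y}_1 \leq \kappa_m N \norm{x-y}_1$ from equation (15) of the proof of Theorem 1 in the cited prior work, rather than re-deriving it. You are right that this is the genuinely delicate point, but as written your treatment of it is not a proof of the theorem as stated for the paper's discrete population model.
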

\begin{proof}\ \\
\underline{Step 1:}
We begin by writing the signalling process $\{s_{t}\}$ generated by the model described in Section~\ref{sec:mod} and Figure~\ref{fig:modflow} as an IFS on the state-space $X=\reals_+^{2M}$ along similar lines as in~\cite{marevcek2016signaling}. Recall that
\[
	s_t = \bmat{u_t^1 & v_t^1 & u_t^2 & \dotsm &u_t^M & v_t^M}
\]
and assume the signalling scheme described in~\eqref{eq:uv} and~\eqref{eq:uvrec}; we repeat for convenience:
\begin{equation}\label{eq:uvrec2}
	\begin{split}
	u_t^m & = q_1 u_{t-1}^m + (1-q_1)c_m(n_{t-1}^m)\\
	v_t^m & = q_2 v_{t-1}^m + (1-q_2)\,\bigl|c_m(n_{t-1}^m)-u_{t-1}^m\bigl|.
	\end{split}
\end{equation}

Now observe that
\begin{align}
	u_{t+1}^m &= q_1 u_{t}^m + (1-q_1)c_m(n_{t}^m), \label{eq:ut1} \\
	\intertext{but }
	n_{t}^m &= \card\left(\left\{ i
								 \mid a_t^i=m \right\}\right) 
		= 	\sum_{i=1}^N \delta_{a_t^i,m}  
		=	\sum_{i=1}^N \sum_{\omega\in\Omega}  
				\delta_{\omega^i,\omega} \delta_{\pi^\omega(s_t),m} \notag\\
		& = \sum_{\omega\in\Omega} \underbrace{\left(\sum_{i=1}^N \delta_{\omega^i,\omega} \right)}
				\ind_{\{ \sigma \mid \sigma u_t^m + (1-\sigma)v_t^m < \sigma u_t^s + (1-\sigma)v_t^s \;\forall s\neq m \} }(\omega) \notag \\
		& = \sum_{\omega\in\Omega} \makebox[5.5em][c]{$\mu_t(\omega)$}  \ind_{\{ \sigma \mid \sigma u_t^m + (1-\sigma)v_t^m < \sigma u_t^s + (1-\sigma)v_t^s \;\forall s\neq m \} }(\omega),		\label{eq:fcnmtnt}
\end{align}
where
\[
	 \delta_{a_t^i,m} = \ind_{\{j \mid a_t^j=m\}}(i) = 
			 \begin{cases} 1 & \text{ if } a_t^i=m \\ 0 & \text{ else.}\end{cases}
\]
is the Kronecker symbol, so $\delta_{\omega^i,\omega}$ is 1 if agent $i$ has policy $\omega$, and $\delta_{\pi^\omega(s_t),m}$ is 1 if policy $\omega$ selects resource $m$.  Now, \eqref{eq:fcnmtnt} is unwieldy, but close inspection reveals that it shows that for fixed $\mu_t$, $n_t^m$ is just a function of $s_t$ (through $u_t^s$ and $v_t^s$, $s=1,\dotsc,N$), and plugging this into~\eqref{eq:uvrec2}, we see that $s_{t+1}$ can be written as a function of $s_t$, parametrized by the random variable $\mu_t$:
\[
	s_{t+1} = w_{j}(s_t) \text{ if } \mu_t=\eta_j.
\]
The size of the index set $\J$, i.e.\ the number $K$ of possible distributions, is likely gigantic, namely
	 \[
		 K = \binom{\card(\Omega)+N-1}{N},
	 \]
but nevertheless finite.

\underline{Step 2:} Next, it will be established that if the $c_m$ are Lipschitz, then so are the $w_{j}$ of Step 1. For this, let $x,y\in\reals_+^{2M}$, $x\neq y$ (they play the role of two possible $s_{t-1}$) and fix the population $\mu_t$ to some $\eta_j$. From~\eqref{eq:fcnmtnt} it then follows that the congestion profile at time $t$ is just a function of the signal broadcast at $s_{t-1}$, i.e.\ $x$ or $y$. Let us write $n_t^m(x)$ to reflect this. 
To investigate the Lipschitz constant of $w_j$, we write
\begin{equation*}
	\norm{w_j(x) - w_j(y)}_1 = \sum_{m=1}^{M} R_1^m + R_2^m,
\end{equation*}
where
\begin{multline*}
	R_1^m  \coloneq \Bigl\lvert q_1 x_{2m-1} + (1-q_1) c_m \bigl(n_t^m(x)\bigr) -  \left[q_1 y_{2m-1} + (1-q_1) c_m \bigl(n_t^m(y)\bigr)\right]\Bigr\rvert
\end{multline*}
and
\begin{multline*}
	 R_2^m \coloneq  \Biggl\lvert  q_2 x_{{2m}} + (1-q_2)\abs{c_m(n_{t-1}^m(x))-x_{2m-1}}
	 \\ -  \Bigl[
	q_2 y_{{2m}} + (1-q_2)\abs{c_m(n_{t}^m(y))-y_{2m-1}} 
	\Bigr]		\Biggr\rvert	
\end{multline*}
where the first term in the sum corresponds to $u_{t+1}^m$ and the second to $v_{t+1}^m$. We shall investigate those two terms separately, but first we repeat an important result from the proof of Theorem 1, specifically equation (15), 
 in~\cite{marevcek2016signaling}, namely that $n_t^m(\cdot)$ is Lipschitz in the 1-norm, so there is a constant $\kappa_m$ such that 
\begin{equation}\label{eq:ntm-lip}
	\abs{n_t^m(x) - n_t^m(y)} \leq \kappa_m \max_\omega \mu(\omega) \norm{x-y}_1 \leq \kappa_m N \norm{x-y}_1.
\end{equation}
With~\eqref{eq:ntm-lip}, we then have 
\[
	\abs{c_m\bigl(n_t^m(x)\bigr)-c_m\bigl(n_t^m(y)\bigr)}\leq L_m 	\abs{n_t^m(x) - n_t^m(y)} \leq L_m \kappa_m N \norm{x-y}_1,
\]
where $L_m$ denotes the Lipschitz constant of $c_m$.

Now we can bound
\begin{multline*}
	R_1^m = 
	\Bigl\lvert q_1 (x_{2m-1}- y_{2m-1}) + (1 - q_1) c_m \bigl(n_t^m(x)\bigr) -  c_m \bigl(n_t^m(y)\bigr)\Bigr\rvert \leq \\
	  q_1 \bigl\lvert x_{2m-1}- y_{2m-1}\bigr\rvert + (1 - q_1) L_m \kappa_m N \norm{x-y}_1
\end{multline*}
and
\begin{multline*}
	R_2^m = \\ \Bigl\lvert  q_2 (x_{2m}- y_{2m}) + (1-q_2)
		\bigl(\abs{c_m(n_{t-1}^m(x))-x_{2m-1}}-\abs{c_m(n_{t-1}^m(y))-y_{2m-1}}
	  \bigr)\Bigr\rvert \leq \\
	  q_2 \abs{x_{2m}- y_{2m}} + (1-q_2) \bigl(	\abs{x_{2m-1}-y_{2m-1}}+
			  \abs{c_m(n_{t-1}^m(x)) - c_m(n_{t-1}^m(y))}	  \bigr)\leq \\
	  q_2 \abs{x_{2m}- y_{2m}} + (1-q_2) \abs{x_{2m-1}-y_{2m-1}} +
		   (1-q_2)L_m \kappa_m N  \norm{x-y}_1,
\end{multline*}
where we used that $\abs{a-b}-\abs{a'-b'}\leq\abs{a-a'}+\abs{b-b'}$, which can be derived using the reverse triangle inequality.\footnote{$
	|a-b|-|a'-b'|\leq\Bigl\lvert|a-b|-|a'-b'|\Bigr\rvert\leq|a-b-(a'-b')|=|a-a'+b'-b|\leq|a-a'|+|b-b'|
	$}
Reassembling and slight reordering of terms yields
\begin{multline}
	\norm{w_j(x) - w_j(y)}_1 = \sum_{m=1}^{M} R_1^m + R_2^m \leq \\
	\sum_{m=1}^{M} (1 + q_1 - q_2)\abs{x_{2m-1}- y_{2m-1}} + q_2 \abs{x_{2m}- y_{2m}} + 
			(2-q_1-q_2) L_m\kappa_m N \norm{x-y}_1 \leq \\
	\underbrace{ \Biggl( \max\{q_2, 1+q_1-q_2\} + (2-q_1-q_2)N \sum_{m=1}^{M} L_m\kappa_m
	\Biggr)    }_{\eqcolon \bar{L}_\mu} \norm{x-y}_1
\end{multline}
and so we have found an expression for (an upper bound of) the Lipschitz constant $\bar{L}_j$ of $w_j$.

\underline{Step 3:} Now we are in a position to formulate sufficient conditions on $L_m$ to allow application of Theorem~\ref{thm:markch}. The condition 
\[
	\sum_{j\in\J} m(j) \log \bar{L}_j <0
\]
is certainly satisfied independently of $m$ (recall that $m$ is the stationary distribution of the underlying Markov chain) if $\bar{L}_j<1$ for all $j$. This is the case if 
\begin{equation}\label{eq:q2}
	 \sum_{m=1}^{M} L_m\kappa_m < \frac{1- \max\{q_2, 1+q_1-q_2\}}{N\,(2-q_1-q_2)}
\end{equation}
Hence the $\kappa_m'$ postulated in the theorem statement need to satisfy
\[
	\sum_{m=1}^{M}\frac{\kappa_m}{\kappa_m'} < \frac{1- \max\{q_2, 1+q_1-q_2\}}{N\,(2-q_1-q_2)}.
\]
One choice is 
\begin{equation}\label{eq:kappam}
	\kappa_m' = \kappa_m M \frac{N\,(2-q_1-q_2)}{1- \max\{q_2, 1+q_1-q_2\}},
\end{equation}
but there are obviously many more.

Hence, step 1 shows that the signalling process $\{s_t\}$ can be cast as an RIFS on the normed space $(\reals^{2M},\norm{\cdot}_1)$ with a family of functions $\bigl\{w_j \mid j\in\J
\bigr\}$, that by step 2 are Lipschitz. By step 3, under the right conditions, this RIFS is contractive on average and by Theorem~\ref{thm:markch}, we then conclude that $s_t$ converges in distribution. Since, by~\eqref{eq:fcnmtnt}, $n_t$ is a function of the two random variables $s_t$ and $\mu_t$, the congestion profile also converges in distribution. \qed
\end{proof}

\begin{remark}
	Of course, Lipschitz constants have to be nonnegative, and thus~\eqref{eq:q2} implies that in order for the proof to work, $q_2>q_1$ has to hold. 
\end{remark}
\begin{remark}
	While it is necessary for $c_m(\cdot)$ to be Lipschitz in order for the theory of RIFSs to apply, the bounds in~\eqref{eq:kappam} are far from tight, due to conservative approximations made in the proof. For instance in~\eqref{eq:ntm-lip}, $\max_\omega \mu(\omega)$ is bounded by $N$. This bound is obviously only tight for populations where all agents have the same policy; for most populations, it is a gross overestimation. Additionally, numerical computations of $m$, the stationary distribution of the Markov chain, indicate that the stationary probability $m(i)$ of such populations, and hence their weight $m(i)$ in~\eqref{eq:avgcontractive}, is typically very small, and this bound could most likely be improved upon.
\end{remark}

	

\section{A Computational Illustration}
\label{sec:computational}

\begin{figure}[t!]
  \includegraphics[clip=true,width=0.49\textwidth]{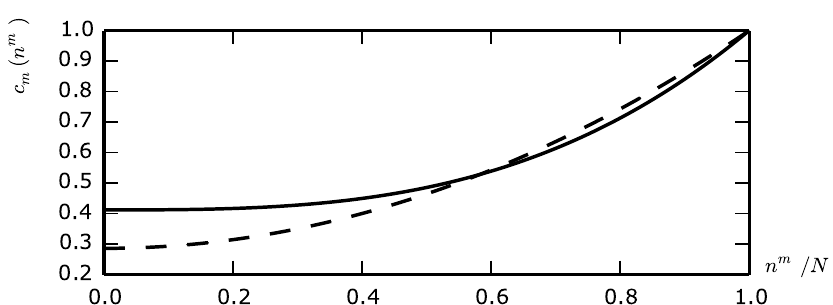}
  \includegraphics[clip=true,width=0.49\textwidth]{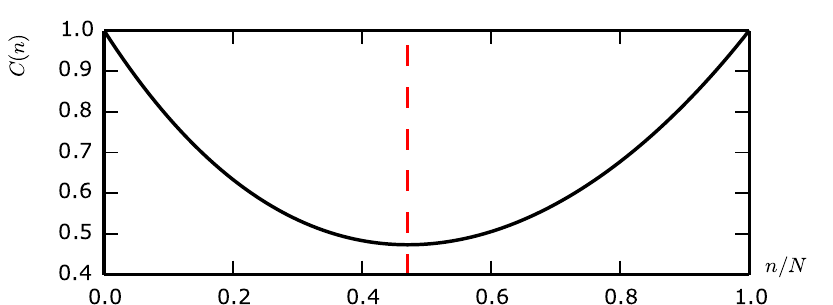}              \\
  \includegraphics[clip=true,width=0.49\textwidth]{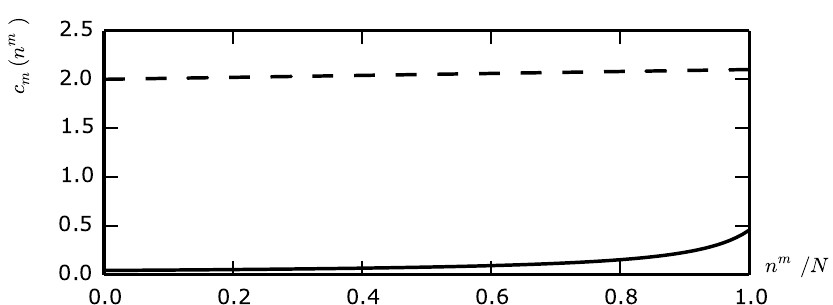}
  \includegraphics[clip=true,width=0.49\textwidth]{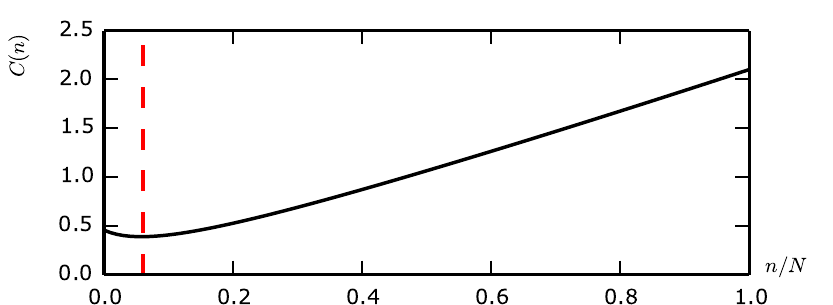}              \\
  \includegraphics[clip=true,width=0.49\textwidth]{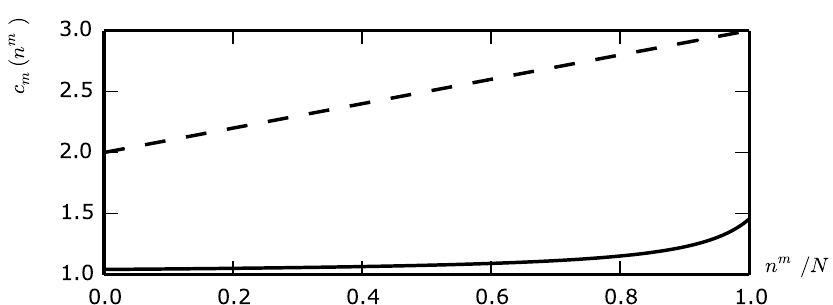}
  \includegraphics[clip=true,width=0.49\textwidth]{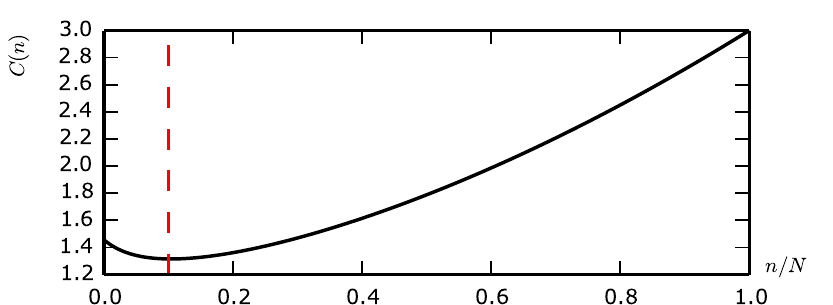}              
  \caption{Three examples. 
    Top left: Cost functions $x^2 + 0.4$ (dashed) and $(x^3 + 0.7)/1.7$ (solid line).
    Top right: The corresponding social cost with optimum of approximately 0.473 at 0.47 (dashed vertical line).
    Middle left: Cost functions $x/10 + 2$ (dashed) and $1 + 1/(1.1-x)/22$ (solid line).
    Middle right: The corresponding social cost with optimum of approximately 0.387 at 0.06 (dashed vertical line).
    Bottom left: Cost functions $x + 2$ (dashed) and $1 + 1/(1.1-x)/22$ (solid line).
    Bottom right: The corresponding social cost with optimum of approximately 1.315 at 0.1 (dashed vertical line).
}
  \label{fig:ushaped4settings}
\end{figure}

Let us now illustrate the main result on a number of examples. Throughout, we use $M=2$,
three variants of $c_1(x)$ and $c_2(x)$ as suggested in Figure~\ref{fig:ushaped4settings},
and $q_1 = 0.45$, $q_2 = 0.5$.
First, consider an example with $N=2$ agents and $\Omega=\{0,0.5,1\}$, i.e., $\card(\Omega)=3$ policies and $K=\binom{\card(\Omega)+N-1}{N}=6$ possible distributions, 
assuming that every distribution possible in theory is also sensible in practice.
One possible transition matrix for the case of the Wasserstein metric is, up to rounding, 
\[
	P_W = \bmat{
		0.35 & 0.18 & 0.12 & 0.12 & 0.07 & 0.06 \\ 
		0.22 & 0.28 & 0.18 & 0.18 & 0.11 & 0.09 \\ 
		0.14 & 0.18 & 0.29 & 0.12 & 0.18 & 0.14 \\ 
		0.14 & 0.18 & 0.12 & 0.29 & 0.18 & 0.14 \\ 
		0.09 & 0.11 & 0.18 & 0.18 & 0.28 & 0.22 \\ 
		0.06 & 0.07 & 0.12 & 0.12 & 0.18 & 0.35},
\]
whereas the corresponding transition matrix for the substitution metric is
\[
	P_S = \bmat{
		0.44 & 0.14 & 0.07 & 0.14 & 0.06 & 0.07 \\ 
		0.18 & 0.36 & 0.18 & 0.14 & 0.14 & 0.07 \\ 
		0.07 & 0.14 & 0.44 & 0.06 & 0.14 & 0.07 \\ 
		0.18 & 0.14 & 0.07 & 0.36 & 0.14 & 0.18 \\ 
		0.07 & 0.14 & 0.18 & 0.14 & 0.36 & 0.18 \\ 
		0.07 & 0.06 & 0.07 & 0.14 & 0.14 & 0.44}.
\]

For instance, this reflects that in the former case there is a 28-35\% chance of the population remaining the same, whereas in the latter, this chance is 36-44\%. For details on how these matrices are obtained, we again refer to Appendix~\ref{sec:app:P}. 
The corresponding behaviour is captured in Figure~\ref{fig:ushaped4withN2},
where each time-series has been obtained as the mean over 10,000 sample paths\footnote{
Notice that although the horizon may seem short and the number of sample paths may seem low, this is justified by our main result, i.e., the convergence in distribution.
Indeed, one may argue that convergence in distribution allows for a principled use of simulations.
}, with error bars at one standard deviation.
Notice that in the case of $M = 2$, the social cost $\sum_{m = 1}^{M} (n_t^m / N) c_m(n_t^m)$ is determined by $n_t^1$ alone, justifying the notation $C(n_t^1)$ .  
Throughout, as proven, one observes the convergence in distribution.

\begin{figure}[t!]
  \includegraphics[clip=true,width=0.49\textwidth]{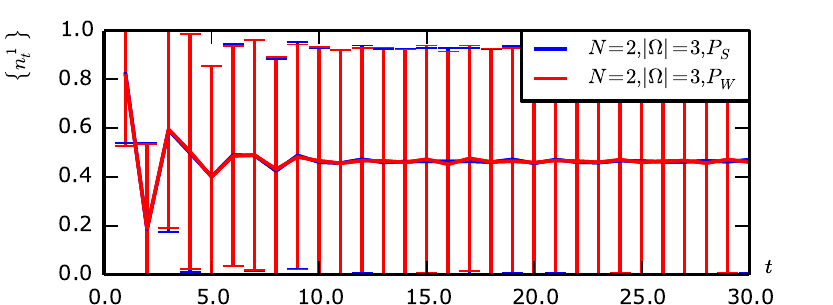}
  \includegraphics[clip=true,width=0.49\textwidth]{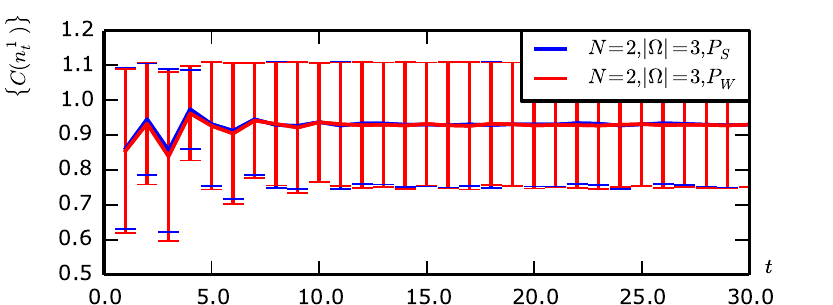} \\
  \includegraphics[clip=true,width=0.49\textwidth]{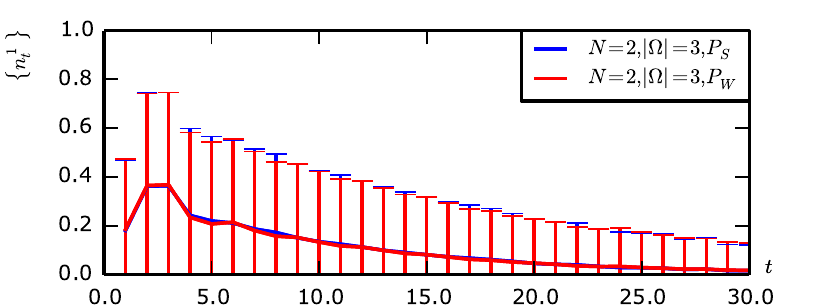}
  \includegraphics[clip=true,width=0.49\textwidth]{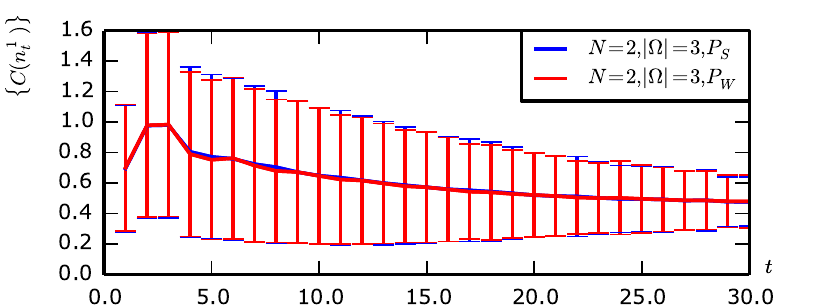} \\
  \includegraphics[clip=true,width=0.49\textwidth]{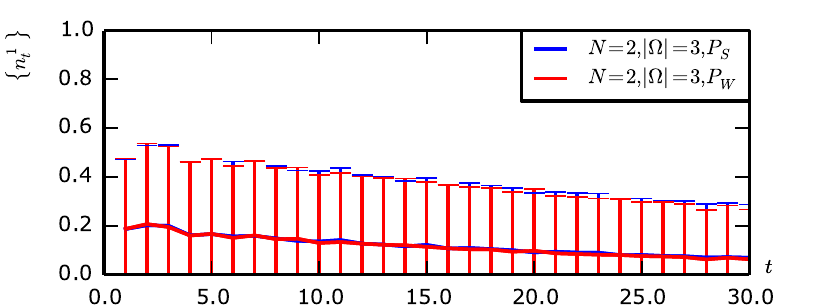}
  \includegraphics[clip=true,width=0.49\textwidth]{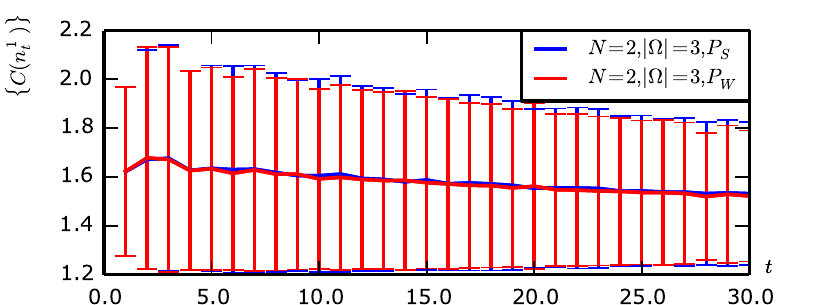}              
  \caption{The behaviour of $N = 2$ agents on the three examples of Figure~\ref{fig:ushaped4settings}, considering both the Wasserstein metric (in red) and the discrete metric (in blue).
    Top left: Evolution of state over time for $x^2 + 0.4$ and $(x^3 + 0.7)/1.7$.
    Top right: The corresponding evolution of the social cost.
    Middle left: Evolution of state over time for $x/10 + 2$ and $1 + 1/(1.1-x)/22$.
    Middle right: The corresponding evolution of the social cost.
    Bottom left: Evolution of state over time for $x + 2$ and $1 + 1/(1.1-x)/22$.
    Bottom right: The corresponding evolution of the social cost. 
}
  \label{fig:ushaped4withN2}
\end{figure}

\begin{figure}[t!]
	\centering
	\parbox[t]{.45\textwidth}{\centering \includegraphics[width=0.44\textwidth]{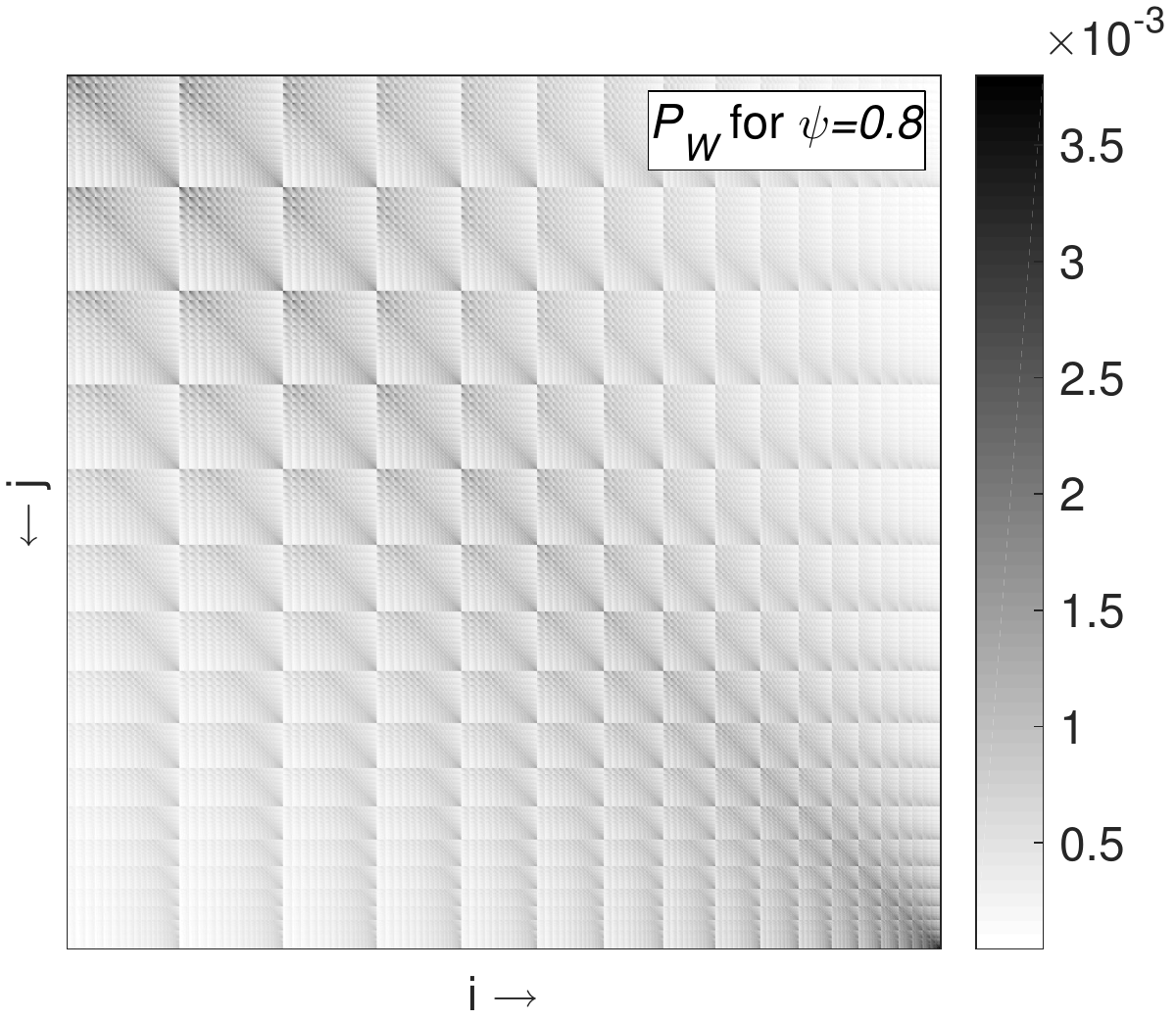}}
	\hfill
	\parbox[t]{.45\textwidth}{\centering \includegraphics[width=0.44\textwidth]{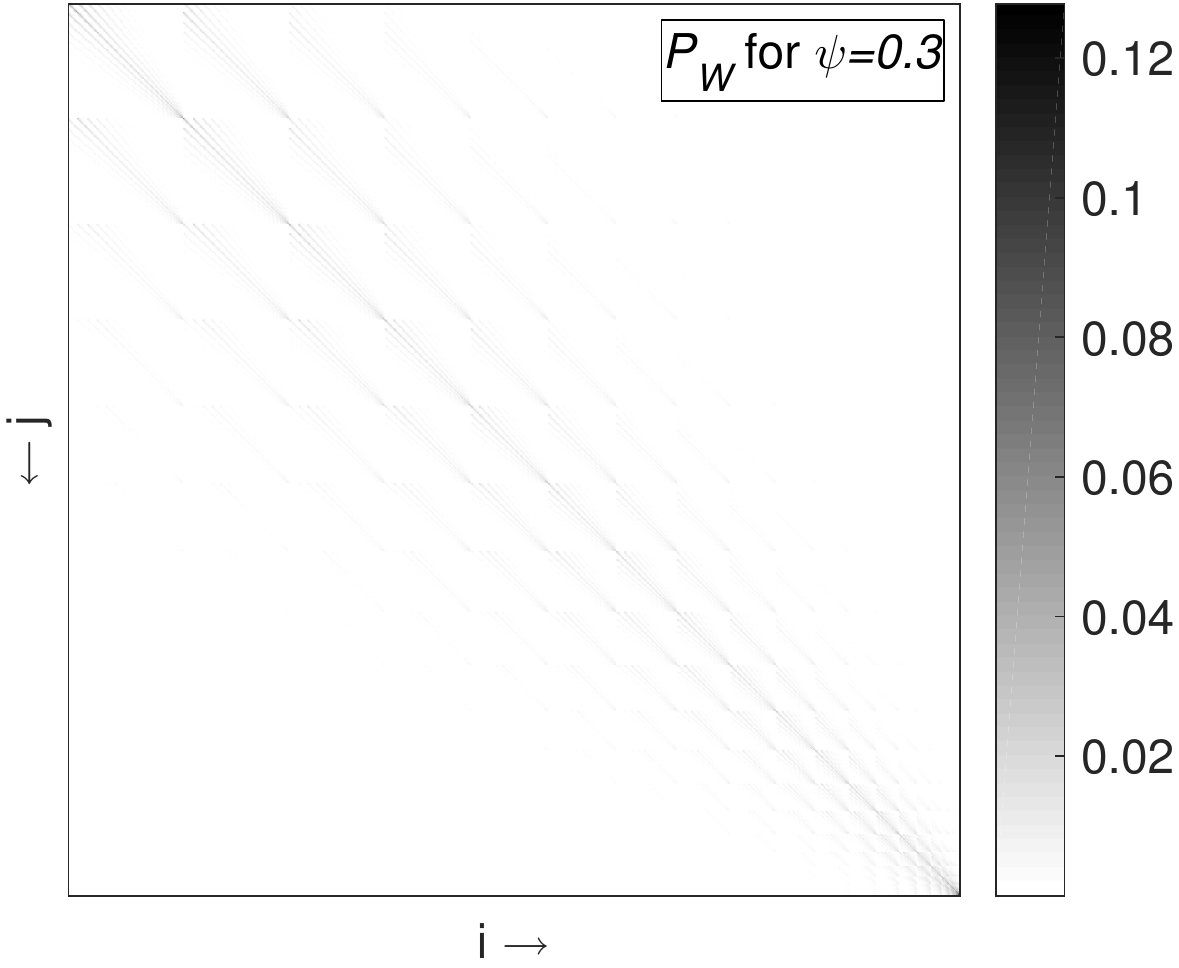}}
	\caption{Transition matrices according to the Wasserstein metric with $N=20$ agents and $\Omega=\{0,1/3,2/3,1\}$ for two different values of $\psi$. The apparent fractal structure is an artefact of the order chosen for the populations $\eta_i$ (Technically, this is referring to an order relation on the set $\{0,\dotsc,N\}^{\card(\Omega)}$, in which the possible populations make up the subset of tuples summing to $N$. The lexicographic order is chosen for the figures, however the effect of a different choice is merely cosmetic.) }
	\label{fig:Pwass}
	\parbox[t]{.45\textwidth}{\centering \includegraphics[width=0.44\textwidth]{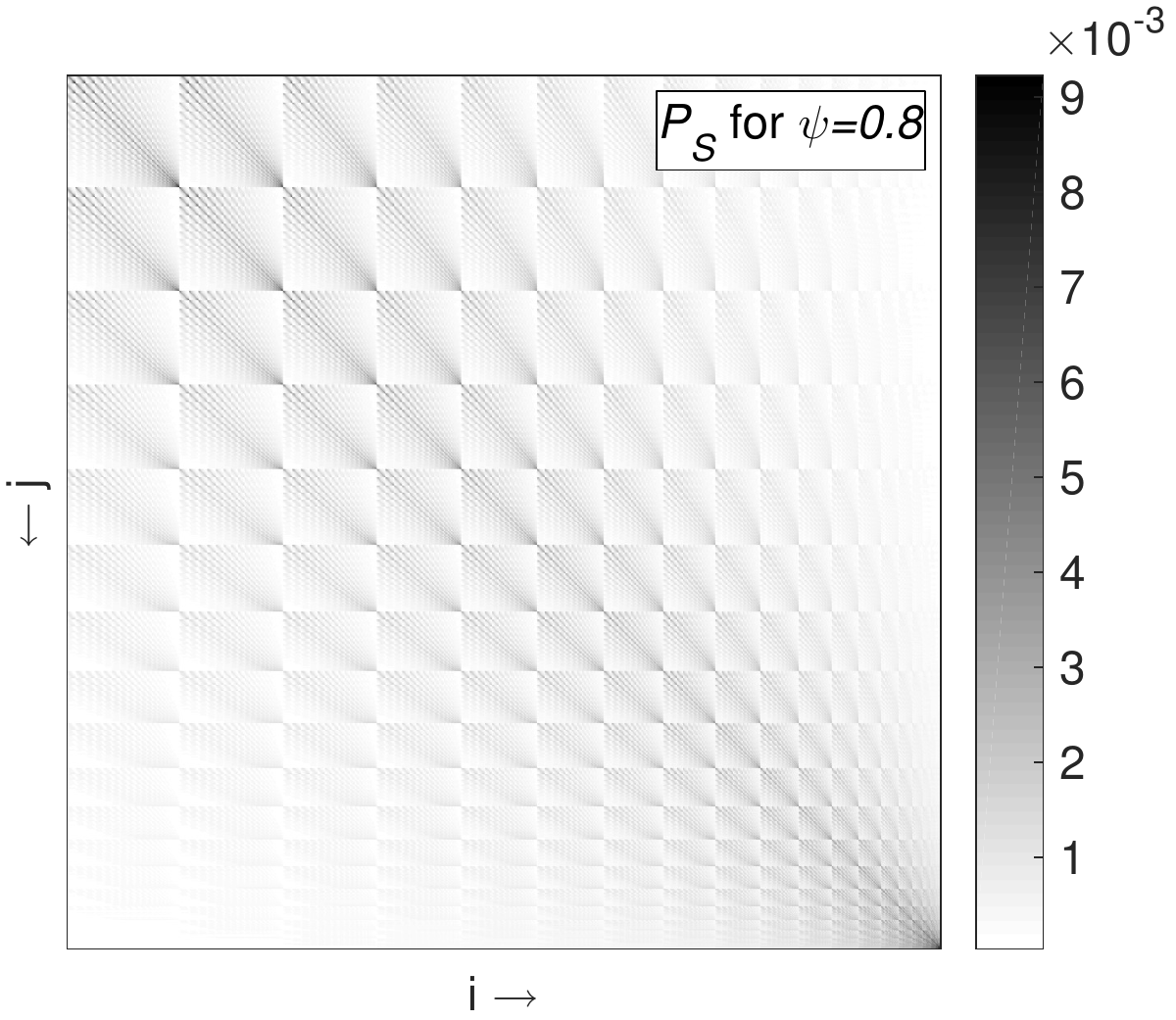}}
	\hfill
	\parbox[t]{.45\textwidth}{\centering \includegraphics[width=0.44\textwidth]{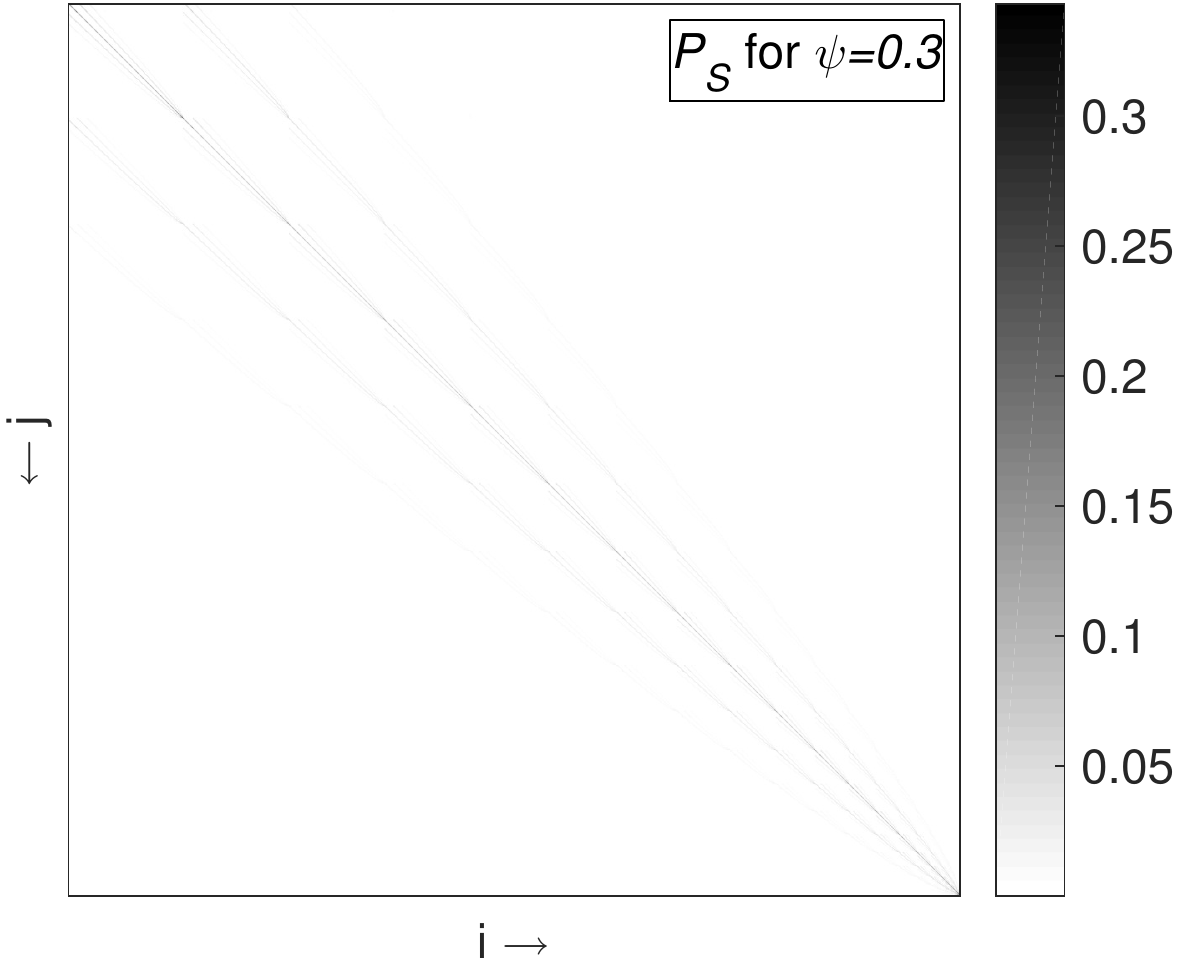}}
	\caption{Transition matrices for the discrete metric with $N=20$ agents and $\card(\Omega)=4$ policies for two different values of $\psi$. The same comments as for Figure~\ref{fig:Pwass} apply.}
	\label{fig:Pdisc}
\end{figure}	

\begin{figure}[t!]
  \includegraphics[clip=true,width=0.49\textwidth]{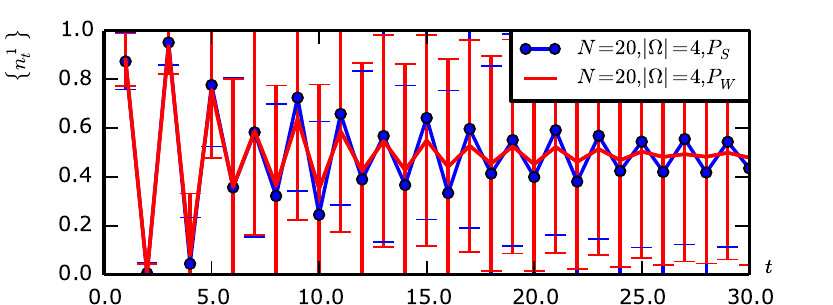}
  \includegraphics[clip=true,width=0.49\textwidth]{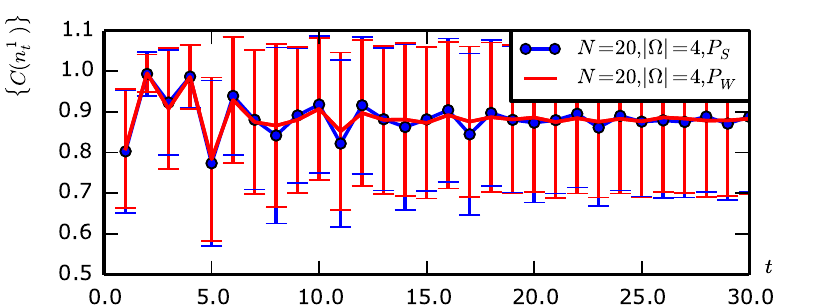} \\
  \includegraphics[clip=true,width=0.49\textwidth]{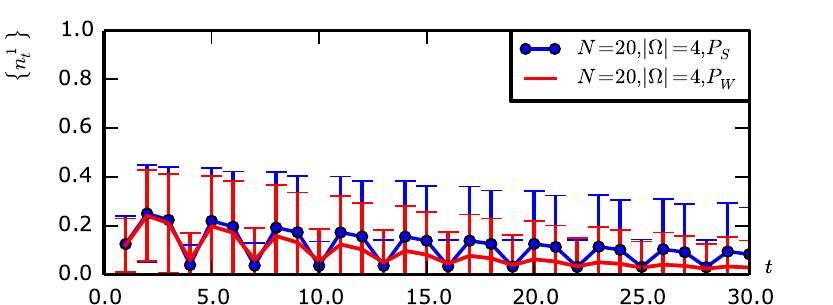}
  \includegraphics[clip=true,width=0.49\textwidth]{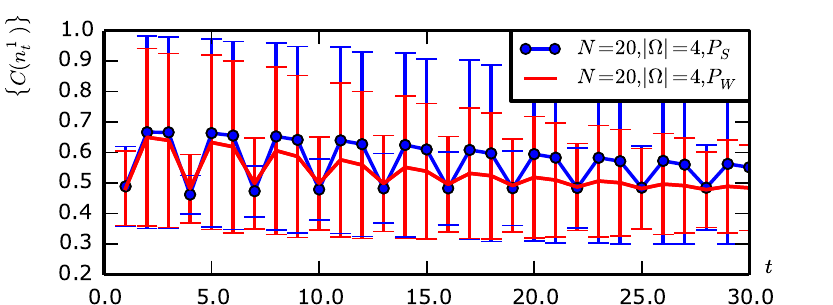} \\
  \includegraphics[clip=true,width=0.49\textwidth]{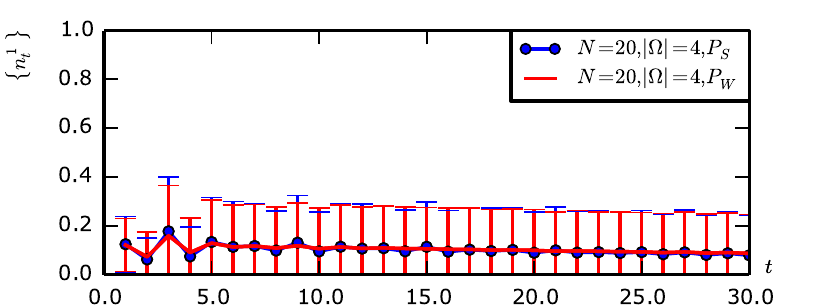}
  \includegraphics[clip=true,width=0.49\textwidth]{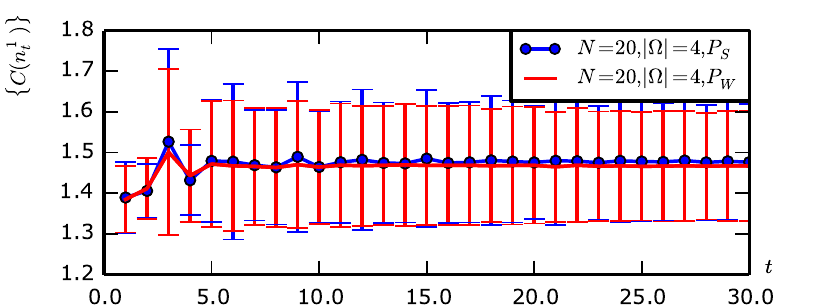}              
  \caption{The behaviour of $N = 20$ agents on the three examples of Figure~\ref{fig:ushaped4settings}, considering both the Wasserstein metric (in red) and the discrete metric (in blue).
    Top left: Evolution of state over time for $x^2 + 0.4$ and $(x^3 + 0.7)/1.7$.
    Top right: The corresponding evolution of the social cost.
    Middle left: Evolution of state over time for $x/10 + 2$ and $1 + 1/(1.1-x)/22$.
    Middle right: The corresponding evolution of the social cost.
    Bottom left: Evolution of state over time for $x + 2$ and $1 + 1/(1.1-x)/22$.
    Bottom right: The corresponding evolution of the social cost.
    Notice the considerable difference in the evolutions of state and cost in the middle and bottom row,
    where $c_2(x)$ are the same and $c_1(x)$ are rather similar.
}
  \label{fig:ushaped4withN20}
\end{figure}


Next, consider an example with $N = 20$ and $\Omega=\{0,1/3,2/3,1\}$, i.e.\ 4 policies and $1771$ distributions.
The $1771 \times 1771$ transition matrices $P_S$ and $P_W$ are displayed in Figures~\ref{fig:Pwass}~and~\ref{fig:Pdisc}.
Note that generating each $P_W$ requires solving $1770\cdot1771/2=1,567,335$ instances of the linear program~\eqref{eq:LP},
as described in Appendix~\ref{sec:app:P}.
The corresponding behaviour is captured in Figure~\ref{fig:ushaped4withN20},
again with each time-series obtained as the mean over 10,000 sample paths
and error bars at one standard deviation.
As in Figure~\ref{fig:ushaped4withN2}, one observes the convergence in distribution throughout.
Unlike in Figure~\ref{fig:ushaped4withN2}, in the middle and bottom rows, one also observes convergence 
to a distribution concentrated around the optima of social cost of approximately 0.387 (middle) and 1.315 (bottom), respectively.
Although the concentration of the limiting distribution around the true optimum cannot be proven generically, its appearance for larger $N$ seems most intriguing.

\section{Related Work}
\label{sec:related}

Our work builds upon a rich history of research in choice modelling \cite{ben1985discrete,Prato2009} and intelligent transportation systems \cite{papageorgiou2007its}.
There, any given user is associated with an origin-destination pair, and hence with a set $\{1, 2, \ldots, M\}$ of routes connecting the two points. 
In the prevailing research direction, it is assumed that the perceived utility of a route $m$ for user $n$ is determined by a scalar $u_t^{n,m} \eqcolon c_m(n_t^m) + \epsilon_t^{n,m}$, where $\epsilon_t^{n,m}$ is a random variable specific to the route, user, and time. The probability that 
user $n$ picks $1 \le m \le M$ at time $t$ is:
  \begin{align}
    \prob_{\theta}&( u_t^{n,m}  < u_t^{n,m'} \; \forall \; m' \in \{1, 2, \ldots, M\} \setminus \{ m \} ) = \label{probabilistic} \\
    \prob_{\theta}&(c_m(n_t^m) + \epsilon_t^{n,m} < c_{m'}(n_t^{m'}) + \epsilon_t^{n,{m'}} \forall \; m' \in \{1, 2, \ldots, M\} \setminus \{ m \} ), \notag
  \end{align}
where $\theta$ is the probability distribution of the vector $\epsilon_t^n \eqcolon (\epsilon_t^{n,1}, \epsilon_t^{n,2}, \ldots,)$.
Clearly, the 
the next-step utilisation can then be expressed 
as an expectation of the perceived utility 
with respect to the distribution
$\theta$,
or as by integrating over $\epsilon_t^n$.
There are a number of popular choices of $\theta$, with varying complexity of the integration over $\epsilon_t^n$.

The first such choice model \cite{ben1985discrete} is the Multinomial Logit (MNL) of \cite{dial1970probabilistic},
which assumes that 
$\epsilon_t^{n,m}$ are independent and identically distributed Gumbel random variables.
There, the equilibrium is computable as a convex program, under the assumption it exists and the cost functions are convex,
but one may struggle to justify the fact that overlapping routes have uncorrelated $\epsilon_t^{n,1}$.
In contrast, in the Multinomial Probit (MNP) model of \cite{daganzo1977stochastic}, $\epsilon_t^n$ is assumed to be a multivariate normal random vector with mean 0 and a given covariance matrix.
There, assuming the conditions for the existence of an equilibrium, one considers unconstrained minimisation of an integral in the computation of the equilibrium values.  
This has been extended in a number of ways, to obtain
nested logit models \cite{ben1985discrete},
path-size logit \cite{hoogendoorn2005path},
multinomial Weibit \cite{castillo2008closed}, 
c-logit \cite{zhou2012c},
and yet more recent models \cite{Mishra2012,ahipasaoglu2014beyond} considering distributionally robust optimisation over all distributions with mean 0 and the given covariance matrix.
These models share a number of features, notably given travel costs $c_m(n_t^m)$ and stochastic $\epsilon_t^{n,m}$,
and the existence of an equilibrium, under certain assumptions.

In an alternative line of research, one assumes that the travel costs $c_n(n_t^m)$ are a stationary random process, or a random variable with a distribution fixed over time. One can obtain a multi-parametric route choice model based on risk measures of the random variable 
and  users' choice based on a level of risk-aversion weighing the risk measures, as in \eqref{piomega}.
Based on early simulations \cite{knoop2008traffic}, a variety of such models have been analysed rigorously, recently, 
 e.g., based on expectation and variance \cite{bell2002risk,nikolova2011stochastic} and 
expectation and value at risk \cite{ordonez2010wardrop,nie2011multi}, which is sometimes known as the percentile equilibrium.
Notice that in this line of research, the travel costs are exogenous random variable,
which again makes it possible to guarantee the existence of an equilibrium, under certain assumptions.

Notice that much of the research on exogenous uncertainty in travel costs,
as well as the research on choice models such as MNL and MNP, 
focuses on the conditions for the existence and computations of equilibria. 
In theory, the restriction to the existence of equilibria is rather unfortunate \cite{haavelmo1974can,kerner2016failure},
 as it obscures the complexity of the possible behaviour of the non-linear dynamics.
In practice, it may be very hard for the transportation authorities to steer the system toward the equilibrium, 
  while maintaining the trust of the users, which may be require the information provided to be truthful, in some sense.
\cite{marevcek2015signaling,marevcek2016signaling} show that when one considers the truthful information provision, 
 announcing a function of $c_m(n_{t-1}^m)$ to all users at time $t$,
 the equilibria do not exists, in general.
For example, in the simple case of truthful provision of costs $c_m(n_{t-1}^m)$, with users' choice based on a scalar perceived utility, 
 it is easy to observe a limit-cycle behaviour in a two-route example, where the traffic alternates between the routes, 
 c.f., Appendix 1 of \cite{marevcek2016signaling}.
Even beyond this very simple control strategy, many standard controllers studied in control theory do fail \cite{Fioravanti2016}
 to stabilise the system.
The study of the related dynamical systems without equilibria \cite{Smith2011} is hence rather more recent,
and often connected to the control of traffic lights (also known as signals),
where it has been long recognised that the non-linear dynamics have limit cycles and yet more complex forms of behaviour.

Still, one may want to consider both the the costs $c_m(n_t^m)$ and their perceptions as non-stationary random processes,
generated by the underlying non-linear dynamics.
Mare\v{c}ek et al.\ \cite{marevcek2015signaling,marevcek2016signaling,MarecekShortenYu2016b} have studied 
measure-theoretic notions of convergence within such models, where 
$c_m(n_t^m)$ are non-stationary random processes, based on a closed-loop model with $u_t^m$ and $v_t^m$ being functions of the history
of $n_{t-1}^m, n_{t-2}^m, \ldots$.
Subsequently, the two-parametric choice model 
$\arg \min_{m=1,\dotsc,M} \omega u_t^m + (1-\omega) v_t^m$ in \eqref{piomega},
 in the spirit of bounded rationality \cite{simon1957administrative}
 is used, as in \cite{bell2002risk,nikolova2011stochastic,ordonez2010wardrop,nie2011multi}.
An overview of the choices of parameters $u_t^m$ and $v_t^m$ is provided in Table~\ref{tab:int-signals}.
In $(\delta, \gamma)$-signalling \cite{marevcek2015signaling}, there is one constant $\delta^m$ per resource $m$ and one 
independent identically distributed (i.i.d.) uniform random variable $\nu^m_t$ with
support $[-\delta^m/2,\delta^m/2]$ per resource $m$ and time step $t$.
Subsequently, one defines $u_t^m$ and $v_t^m$ as $c_m(n^m_{t-1}) + \nu_t^m - \delta^m/2$ and $c_m(n^m_{t-1}) + \nu_t^m + \delta^m/2,$ respectively, 
resembling \eqref{probabilistic}.
In $r$-extreme signalling \cite{marevcek2016signaling}, 
$(u_t^m, v_t^m)$ is the minimum and maximum over a time window of size $r$, i.e.,
within $n_{t-1}^m, n_{t-2}^m, \ldots, n_{t-r}^m$.
Finally, in $r$-supported signalling \cite{MarecekShortenYu2016b}, one minimises social cost $C$ over the sub-intervals of minimum and maximum over a time window of size $r$,
with certain restrictions on $(u_t^m, v_t^m)$ captured by the inclusion in a set $P(S_t, \Omega)$, so as to minimise the next-step social cost.
Many options remain to be explored.

\section{Conclusions and Future Work}

We have studied the stability of resource allocation, where a central authority broadcasts two values for each resource and agents take a convex combination of the two values, with the
distribution of the convexifying coefficients across the population varying over time
in a non-stationary fashion.
Our contributions are as follows:
\begin{itemize}
\item A behavioural model, which does not simplify the nonlinear dynamics to a fixed point and considers evolution of the levels of risk-aversion in the population governed by a Markov chain.
\item Novel means of information provision in resource-allocation problems, based on exponential smoothing of the past costs and past variance in the costs.
\item A convergence result for the particular behavioural model and the particular means of information provision, whose proof technique may be more widely applicable.
\end{itemize}
This may spur much further research, along several directions.

One direction may focus on the wide variety of means of deriving the information to broadcast, 
some of which are suggested in Table~\ref{tab:int-signals}.
Outside of (A) further variants of the exponential smoothing, one could consider 
(B) the provision of the expected value and variance, c.f. \cite{bell2002risk,nikolova2011stochastic},
(C) the expectation and value at risk for a given coefficient $\alpha$ and distribution function $L$ with support $\{ c_m(n_t^m) \}_t$,
 c.f. \cite{ordonez2010wardrop,nie2011multi},
and (D) the expectation and conditional value at risk, also known as the expected shortfall, with the same distribution.
For both the previously studied variants and those suggested in Table~\ref{tab:int-signals}
for the first time, bounds on the concentration of the limiting distribution around
its mean, and conditions for the mean being socially optimal, remain a major open problem.
One may also bound the rate of convergence, c.f. \cite{slkeczka2011rate},
 using the coupling argument of Hairer \cite{hairer2005coupling}.
Although such arguments are somewhat technical, 
 they allow for a principled use of simulations, with confidence bounds for a particular simulation set-up.

We also believe that the model of behaviour we have introduced is rich enough so as to allow
the application in many other domains.
For example in power systems, prices may be adjusted dynamically, but are currently not announced in real time, so as to prevent additional fluctuations in demand, leading to additional costs.
A key open problem hence is how to provide the information on prices in a predictable and socially optimal fashion. 
One option may be to provide retail consumers with information on both the costs of energy and levels of pollution.
Assuming each customer weighs these differently, one may apply a similar model for the evolution of the distribution of such 
a coefficient across the population.
Similar reasoning may be applicable more broadly.

\paragraph{Acknowledgement} Funding from the European Union Horizon 2020 Programme (Horizon2020/2014-2020) under grant agreement number 688380 is gratefully acknowledged.

\begin{landscape}
\begin{table}[h]
    \centering
Related work:\\[2mm]
    \begin{tabularx}{1.55\textwidth}{| l | p{4.5cm} p{5.5cm}  X |}
        \hline
Ref.                         & Name                          & $u_t^m$    & $v_t^m$ \\ \hline
\cite{marevcek2015signaling}  & $(\delta, \gamma)$-signalling & $c_m(n^m_{t-1}) + \nu_t^m - \delta^m /2$ & $c_m(n^m_{t-1}) + \nu_t + \delta^m /2$ \\ 
\cite{marevcek2016signaling}  & $r$-extreme signalling        & $\arg \min_{j=t-r,\ldots,t-1} \{ c_m(n_j^m) \}$   &  $\arg \max_{j=t-r,\ldots,t-1} \{ c_m(n_j^m) \} $ \\ 
\cite{MarecekShortenYu2016b} & $r$-supported signalling      & $\proj_{u_t^m} \arg \min_{ (u_t^m, v_t^m) \in P(S_t, \Omega)} C(n_t)$ & $\proj_{v_t^m} \arg \min_{ (u_t^m, v_t^m) \in P(S_t, \Omega)} C(n_t)$ \\ 
Here                         & Exponential smoothing         & $q_1 u_{t-1}^m + (1-q_1)c_m(n_{t-1}^m)$ & $q_2 v_{t-1}^m + (1-q_2)\,\bigl|c_m(n_{t-1}^m)-u_{t-1}^m\bigl|$       \\ \hline
    \end{tabularx}\\[6mm]
Future work suggestions:\\[2mm]
    \begin{tabularx}{1.55\textwidth}{| l | p{4.5cm} p{5.5cm}  X |}
        \hline
Ref.                         & Name                          & $u_t^m$    & $v_t^m$ \\ \hline
A  & Variants of smoothing         & $q_1 u_{t-1}^m + (1-q_1)c_m(n_{t-1}^m)$   &  $q_2 u_{t-1}^m + (1-q_2)c_m(n_{t-1}^m)$  \\ 
B  & Mean and variance             & $\frac{1}{r} \sum_{j=t-r,\ldots,t-1} c_m(n_j^m)$   &  $\frac{1}{r} \sum_{j=t-r,\ldots,t-1} (c_m(n_j^m) - u_t^m)^2$  \\ 
C  & Mean and value at risk        & $\frac{1}{r} \sum_{j=t-r,\ldots,t-1} c_m(n_j^m)$   &  VaR$_\alpha \coloneq \inf\{l \in \mathbb{R}: \prob(L>l)\le 1-\alpha\}$  \\ 
D  & Mean and conditional VaR            & $\frac{1}{r} \sum_{j=t-r,\ldots,t-1} c_m(n_j^m)$   &  CVaR$_\alpha \coloneq \frac{1}{\alpha}\int_0^{\alpha} \mbox{VaR}_{\gamma} d\gamma $ \\ \hline
    \end{tabularx}\\[6mm]
\caption{A comprehensive overview of the related work and suggestions for future work within two-parameter information provision and two-parameter route choice formulations.}
\label{tab:int-signals}
\end{table}
\end{landscape}

\bibliographystyle{spmpsci}
\bibliography{behavioural,traffic,experiments,bandits,ifs}

\newpage
\appendix
\section{The Transition Matrix $P$}\label{sec:app:P}
In order to run simulations, and possibly also to obtain tighter theoretical results than Theorem~\ref{thm:main}, based on the model of population dynamics proposed in Section~\ref{ssec:popdyn}, it is necessary to compute a transition matrix $P$ for the Markov chain generating the driver populations. We elaborate here on this procedure.


\subsection{$P$ to Encode Time Dependence}\label{ssec:app:timedep}

If $P$ is used to model the dependence of the populations on an underlying temporal process, such as the time of day changing, the Markov chain would resemble a cycle: the state is forced to progress from morning to noon to afternoon and so on, until the day is over and everything repeats, there would be no transitions backwards. As explained in Section~\ref{ssec:popdyn}, the simplest encoding of that would yield a matrix $P$ that is a unit shift, with ones on the first superdiagonal and a 1 one in the left bottom corner (if the different times of day are numbered appropriately). This would reflect a deterministic behaviour and thus again not be very realistic. Instead, there should be several possible populations per time of day. To illustrate the idea, consider the same example as in section~\ref{ssec:popdyn}, but assume there are now two different possible noon populations that both have the same probability. Then, $P$ would be
\[	
		P=\bmat{0 & .5 & .5 & 0 & 0 \\
				0 & 0 & 0 & 1 & 0  \\
				0 & 0 & 0 & 1 & 0  \\
				0 & 0 & 0 & 0 & 1 \\
				1 & 0 & 0 & 0 & 0}	,
\]
reflecting that if it was ``morning'' at time $t$, at time $t+1$ there is a 0.5 probability for each of the noon populations, but that in either case at $t+2$ we will have the afternoon population. 

Formally: Assume there are $T$ times of day, each with a set of $N_\ell$ different populations, all of which are equally probable. That would be reflected in a transition matrix
\[
	P = \left[\begin{array}{c;{1pt/1pt}c;{1pt/1pt}c;{1pt/1pt}c;{1pt/1pt}c}
			\vphantom{\ddots}0_{N_1 \times N_1} & \frac{1}{N_2}\one_{N_1 \times N_2 } & 0_{N_1 \times N_3} &\cdots&  0_{N_1 \times N_T}\\ \cdashline{1-5}[1pt/1pt] 
			0_{N_2 \times N_1} & 0_{N_2 \times N_2} & \frac{1}{N_3}\one_{N_2 \times N_3 } & \ddots & \vdots \\ \cdashline{1-5}[1pt/1pt] 
			\vdots & & \ddots & \ddots & \\ \cdashline{1-5}[1pt/1pt] 
			0_{N_{T-1} \times N_1} & &  &\ddots &\frac{1}{N_T}\one_{N_{T-1} \times N_T } \\ \cdashline{1-5}[1pt/1pt]
			\vphantom{\ddots}\frac{1}{N_1}\one_{N_T \times N_1 } & 	0_{N_T \times N_2} & \cdots &  \hphantom{0_{N \times N_1}} &	0_{N_T \times N_T}		
		 \end{array}
		 \right],
\]
where $0_{M \times N}$ and $\one_{M \times N }$ denote $M \times N$ matrices of all zeros and ones, respectively.

\subsection{Earth Mover's Distances (EMDs)}\label{ssec:app:EMD}
The Earth Mover's Distance (EMD) between histograms (which in the text have been also called populations and distributions) gets its name from interpreting a histogram as a pile of dirt. To transform one histogram into another, the dirt has to be rearranged; the \textit{ground distance} between two to bins (or categories) is the work necessary to move one unit of dirt from one bin to the other, and the distance $\Delta(\eta, \gamma)$ between two histograms $\eta$ and $\gamma$ (with the same categories) is then defined as the minimal work necessary to transform one histogram into the other. For our purposes, the dirt corresponds to the agents, and the bins correspond to the policies $\Omega$.

The choice of ground distance $h(\omega,\omega')$ between policies $\omega$ and $\omega'$ now depends on  what brings about the change of individual policies. As already mentioned in Section~\ref{ssec:popdyn}, if an individual driver's policy does not change, but the driver instead leaves the road network and is replaced by a new driver, then $h(\omega,\omega')$ should only reflect that an agent has to be replaced, but be independent of $\omega$ and $\omega'$:
\[
	h(\omega,\omega') = 1-\delta_{\omega,\omega'} =  \begin{cases}
	0 & \text{if } \omega=\omega'\\
	1 & \text{if } \omega\neq\omega'.
	\end{cases}
\]

If instead the drift of an individual driver's policy is to be modelled, $h(\cdot,\cdot)$ should reflect the fact that drastic changes are less likely to occur than slight ones; in other words the ground distance $ h(\omega,\omega')$ needs to be greater if the difference $\abs{\omega-\omega'}$ is. One particularly simple choice is
\[
	h(\omega,\omega') = \abs{\omega-\omega'}.
\]
This special case of an EMD is better known -- especially in the case of continuous distributions -- as the Wasserstein distance, and we will call it by this name, too.

In either case, computation of $\Delta(\cdot,\cdot)$ requires solving an optimization problem, as there are many ways to ``rearrange the dirt,'' but not all of them are optimal. For any choice of $h(\cdot,\cdot)$, the EMD can be computed as the solution of a linear program (LP), and if, as in our case, the items counted by the histograms cannot be divided into smaller units, an integer linear program (ILP):
Let $x_{ij}$ denote the amount of agents changing policy from $\omega_i$ to $\omega_j$, and let the $\card(\Omega)\times \card(\Omega)$ matrix $H$ have elements $H_{ij}=h(\omega_i,\omega_j)$. Then the minimal amount of work necessary to transform $\eta$ into $\gamma$ is the optimal cost of
\begin{equation}\label{eq:LP}\tag{LP}
\begin{aligned}
\text{minimize } &\qquad& & y  = \trace(H^T X) = \sum_{i=1}^M\sum_{j=1}^M x_{ij} h(\omega_i,\omega_j) \\
\text{subject to } &&& \left\{ \begin{aligned}
x_{ij} &\geq0 & x_{ij}&\in\integers\\
\one^T X &= \eta^T \\
X \one &= \gamma.
\end{aligned}\right.
\end{aligned}
\end{equation}
Then,
\[
	\Delta(\eta, \gamma) \coloneq y^\text{opt}.
\]

\begin{remark}
	For the substitution metric, we have \[H=\one_{\card(\Omega)\times \card(\Omega)} - I_{\card(\Omega)}.\] 
\end{remark}

\begin{remark}
	\eqref{eq:LP} has the form of a so-called transportation problem. Hence, if the problem data is integer, then there is always an integer optimum. In the present case, $\eta$ and $\gamma$ are integer by definition, so the only concern is $H$ for the Wasserstein metric. However, if we choose all $\omega_i$ to be rational, then multiplying $H$ by their least common denominator (or simply all denominators) renders $H$ integer without changing the optimal solutions $X^\text{opt}$. This class of problems also allows more efficient solution than a general LP via special algorithms, see e.g.~\cite{luen_opt}.
\end{remark}

The probability of transitioning from $\eta$ to $\gamma$ should of course be smaller if the distance $\Delta(\eta,\gamma)$ (i.e.\ the ``work'' required) is greater; hence, if now $\vec{\eta}_i$ and $\vec{\eta}_j$ denote two populations, then $p_{ij}$ should be a bounded, nonnegative, decreasing function of $\Delta(\vec{\eta}_i,\vec{\eta}_j)$. One class of functions satisfying all requirements is $p_{ij} = \alpha \psi^{\Delta(\vec{\eta}_i,\vec{\eta}_j)}$, where $\psi\in(0,1)$ and $\alpha$ is a normalization factor to make the resulting matrix $P$ row-stochastic. Then, we can construct $P$ from
\begin{align*}
	\tilde{p}_{ij} &= \psi^{\Delta(\vec{\eta}_i,\vec{\eta}_j)}\\
	{p}_{i j} &= \tilde{p}_{ij}\biggm/ 
	\raisebox{-1ex}{$\displaystyle\left( \sum_k \tilde{p}_{i k} \right)$} .
\end{align*}
The parameter $\psi$ can be interpreted as a measure of the probability of one agent changing policy (for whatever reason). For $\psi=1$, we get $p_{ij} = 1/K$~$\forall i,j$, i.e.\ we recover the i.i.d.\ case of e.g.~\cite{marevcek2016signaling}.

\end{document}